\numberwithin{equation}{section}
\numberwithin{figure}{section}
\newtheorem{definition}{Definition}[section]
\newtheorem{remark}[definition]{Remark}\newtheorem{lemma}[definition]{Lemma}\newtheorem{thm}[definition]{Theorem}\newtheorem{coro}[definition]{Corollary}\newtheorem{assumption}[definition]{Assumption}
\numberwithin{equation}{section}
\begin{document}
\global\long\def\tp{u_{+}}
 \global\long\def\tm{u_{-}}
 \global\long\def\tg{u_{\Gamma}}
 \global\long\def\mp{m_{+}}
 \global\long\def\mm{m_{-}}
 \global\long\def\mg{m_{\Gamma}}
 \global\long\def\cp{\frac{\kappa_{+}}{c_{+}}}
 \global\long\def\cm{\frac{\kappa_{-}}{c_{-}}}
 \global\long\def\cg{\frac{\kappa_{\Gamma}}{c_{\Gamma}}}
 \global\long\def\div{\mathrm{div}}
 \global\long\def\op{\Omega_{+}}
 \global\long\def\om{\Omega_{-}}
 \global\long\def\tr{\mathrm{tr}_{\Gamma}\,}

\global\long\def\ttp{\bar{u}_{+}}
 \global\long\def\ttm{\bar{u}_{-}}
 \global\long\def\ttg{\bar{u}_{\Gamma}}
 \global\long\def\tinf{u^{\infty}}

\begin{abstract}
We show global well-posedness and exponential stability of equilibria
for a general class of nonlinear dissipative bulk-interface systems.
They correspond to thermodynamically consistent gradient structure
models of bulk-interface interaction. The setting includes nonlinear
slow and fast diffusion in the bulk and nonlinear coupled diffusion
on the interface. Additional driving mechanisms can be included and
non-smooth geometries and coefficients are admissible, to some extent.
An important application are volume-surface reaction-diffusion systems
with nonlinear coupled diffusion. 
\end{abstract}

\title[Bulk-interface interaction]{Global existence, uniqueness and stability for nonlinear dissipative
bulk-interface interaction systems}

\author{K. Disser}

\address{TU Darmstadt\\
 Fachbereich Mathematik, AG Analysis\\
 Schlossgartenstr. 7, 64289 Darmstadt \\
 Germany}

\email{kdisser@mathematik.tu-darmstadt.de}

\keywords{bulk-interface interaction, bulk-surface interaction, gradient structure,
nonlinear diffusion, maximal parabolic regularity,\\
\emph{2010~Mathematics~Subject~Classification: }Primary: 35K61,
35K59, Secondary: 35A01, 35B50, 35B40}

\date{\today}
\maketitle

\section{Introduction}

\label{sec:intro} 

In this paper, we consider nonlinear bulk-interface systems that arise
from thermodynamically consistent gradient structure modelling of
the interaction of dissipative dynamics on bulk domains and on and
across interfaces or surfaces. The modelling concept was introduced
in \citep{GlitzkyMielke2013}, \citep{Miel13BII} and it was shown
that it applies to many different processes (examples and references
below in Subsection \ref{subsec:Applications}). Here, the aim is
to show global well-posedness and stability, including in particular
slow and fast diffusion in the bulk, coupled nonlinear diffusion on
the interface and general reaction-type nonlinearities on the interface
and as boundary conditions. The results are based on a specific functional
analytic framework that uses maximal parabolic regularity in $W^{-1,q}$
and global $L^{\infty}$-bounds that derive from the dissipative structure.

\subsection{Model equations\label{sub:PDE} }

The bulk $\Omega\subset\mathbb{R}^{d}$, $d=2,3$, is a bounded set
with boundary $\partial\Omega$ and it can be separated into two open
disjoint domains $\op$ and $\om$ by an interface $\Gamma$ of dimension
$d-1$. The domains $\Omega_{+},\Omega_{-}$ and manifold $\Gamma$
have unit outer normal vector fields $\nu_{+},\nu_{-},\nu_{\Gamma}$
at their boundaries. The scalar quantities $u_{+}:(0,T)\times\Omega_{+}\to\mathbb{R}$,
$u_{-}:(0,T)\times\Omega_{-}\to\mathbb{R}$ and $u_{\Gamma}:(0,T)\times\Gamma\to\mathbb{R}$
interact on and across $\Gamma$, and they satisfy the three sets
of equations
\begin{equation}
\left\{ \!\!\!\begin{array}{rcll}
\dot{u}_{+}\!-\!\div(k_{+}(u_{+})\nabla u_{+}) & = & f_{+}(u_{+}), & \text{in }(0,T)\times\Omega_{+},\\
(k_{+}(\tp)\nabla u_{+})\nu_{+}\!+\!m_{+}(u)(u_{+}\!-\!u_{\Gamma})\!+\!m_{\Gamma}(u)(u_{+}\!-\!u_{-}) & = & g_{+}(u), & \text{on }(0,T)\times\Gamma,\\
(k_{+}(\tp)\nabla u_{+})\nu_{+} & = & h_{+}(u_{+}), & \text{on }(0,T)\times\{\partial\op\backslash\Gamma\},
\end{array}\right.\label{eq:plus}
\end{equation}
and,
\begin{equation}
\left\{ \!\!\!\begin{array}{rcll}
\dot{u}_{-}\!-\!\div(k_{-}(\tm)\nabla u_{-}) & = & f_{-}(u_{-}), & \text{in }(0,T)\times\om,\\
(k_{-}(\tm)\nabla u_{-})\nu_{-}\!+\!m_{-}(u)(u_{-}\!-\!u_{\Gamma})\!+\!m_{\Gamma}(u)(u_{-}\!-\!u_{+}) & = & g_{-}(u), & \text{on }(0,T)\times\Gamma,\\
(k_{-}(\tm)\nabla u_{-})\nu_{-} & = & h_{-}(u_{-}), & \text{on }(0,T)\times\{\partial\om\backslash\Gamma\},
\end{array}\right.\label{eq:minus}
\end{equation}
and, 
\begin{equation}
\left\{ \!\!\!\begin{array}{rcll}
\dot{u}_{\Gamma}\!-\!\div_{\Gamma}(k_{\Gamma}(u)\nabla_{\Gamma}u_{\Gamma})\!-\!m_{+}(u)(u_{+}\!-\!u_{\Gamma})\!-\!m_{-}(u)(u_{-}\!-\!u_{\Gamma}) & = & f_{\Gamma}(u), & \text{in }(0,T)\times\Gamma,\\
(k_{\Gamma}(u)\nabla_{\Gamma}u_{\Gamma})\nu_{\Gamma} & = & h_{\Gamma}(u_{\Gamma}), & \text{on }(0,T)\times\partial\Gamma.
\end{array}\right.\label{eq:Gamma}
\end{equation}
To improve the presentation, we write 
\[
u=(u_{+},u_{-},u_{\Gamma}),\;f=(f_{+},f_{-},f_{\Gamma}),\dots
\]
The coefficient matrices $k$, the scalar transmission coefficients
$m$ and the external forces and inhomogeneous boundary conditions
$f,g,h$ may have (locally Lipschitz) dependence of the solution $u$
and continous dependence of the space variables with 
\begin{equation}
k_{\pm}(x,u_{\pm})\in\mathbb{R}_{\geq0}^{d\times d}\qquad\text{and }\qquad k_{\Gamma}(y,u)\in\mathbb{R}_{\geq0}^{(d-1)\times(d-1)},\label{eq:K}
\end{equation}
and 
\begin{equation}
m_{\Gamma}(y,u),m_{+}(y,u),m_{-}(y,u)\in\mathbb{R}_{\geq0}.\label{eq:M}
\end{equation}
More precise assumptions on $k,m,f,g,h$ are given in Section \ref{sec:Ass}.
Examples are slow or fast diffusion $k_{+}(u_{+})=\kappa_{0}u_{+}^{\rho-1}$
with constants $\rho\in\mathbb{R}$ and $\kappa_{0}>0$, where the
case $k_{+}(u_{+})=\frac{1}{u_{+}^{2}}$ is motivated by the entropic
structure for the system in \citep{Miel13BII} (see Subsection \ref{sub:Onsager-Model}).
The diffusion coefficient $k_{\Gamma}$ can depend on all three unknowns
$u$ in a nonlinear way and the same ist true for the transmission
coefficients $m$. An example is $m_{+}(u)=\frac{1}{u_{+}^{2}u_{\Gamma}}$
(motivated in Subsection \ref{sub:Onsager-Model}). For the global
equilibration result on all three components, it is needed that $m_{\pm}$,
$m_{\Gamma}$ are not only non-negative, but that at least two of
them are positive, so every pair of components interacts at least
indirectly across the interface.

\subsection{Applications\label{subsec:Applications}}

The system \eqref{eq:plus} – \eqref{eq:Gamma} can be related to
several types of applications:
\begin{itemize}
\item An important example is that $f,g$ are given by chemical reaction
rates for the densities $u_{+},u_{-}$ and $u_{\Gamma}$ of species
that interact and react at the interface. A concrete example is 
\begin{equation}
f_{+}(u_{+})=0,g_{+}(u)=-k\alpha(u_{+}^{\alpha}-\kappa u_{\Gamma}^{\beta}),\quad f_{\Gamma}(u)=k\beta(u_{+}^{\alpha}-\kappa u_{\Gamma}^{\beta})\label{eq:reactionRates}
\end{equation}
with $k,\kappa>0$ and $\alpha,\beta\geq1$. This situation is typical
for many (cell) biological, ecological and technological processes
\citep{beRoRo13fastRD,GlitzkyMielke2013,Keil13,Miel13BII} and these
systems have gained increasing attention in analysis and numerical
analysis \citep{BSKM2017,BFL18vsrd,FRT16vsrdNumerik,FKMMN2016BulkSurface,HausbergRoeger18bsrd,madzvamuseChung15bsrdNumerik,SharmaMorgan17bsrd}.
It is shown in \citep{Disser19} how the main result here, Theorem
\ref{thm:local}, applies in this case and allows to extend some of
the previous results in \citep{BSKM2017,BFL18vsrd,FKMMN2016BulkSurface,HausbergRoeger18bsrd,SharmaMorgan17bsrd}
to the case of nonlinear and nonlinearly coupled diffusion and to
the case that only part of the boundary of $\Omega$ is interactive.
Nonlinear and coupled diffusion often appear in the modelling of reaction-diffusion
systems, and they can be naturally associated to bulk-interface systems
\citep{beRoRo13fastRD,Bothe15Multiphysics,GlitzkyMielke2013,KjeBed08NETD,Miel13BII}.
It is the aim of future work to use the analysis started here and
in \citep{Disser19} to study more complex systems of multicomponent
reaction and diffusion.
\item A second example is the modelling of heat conduction within a bulk
material that is separated into two parts by a thin heat-conducting
plate. Particularly at high and low temperatures, thermal conductivity
of bulk and plate materials become nonlinear in their dependence of
temperature and non-equilibrium 
modeling of heat conduction across the plate leads to nonlinear transmission
coefficients $m$ of the type above, see for example \citep{ThermoMatter1970}
for material parameters and Subsection \ref{sub:Onsager-Model} on
the associated Onsager structure. In technological applications, the
geometry often includes sharp edges and singularites where interface
and boundary meet. These non-smooth settings are essentially included
in the analysis here. To the author's knowledge, it is the first result on this particular quasilinear transmission-type problem in a non-smooth setting. 
\item For models of diffusion and transport of electrical charges in semiconductor
devices, like solar cells, active interfaces often play a crucial
role, cf.\,\citep{GlitzkySolar2012}. In particular for the case
of three spatial dimensions, well-posedness for these systems in non-smooth
geometric settings is still hard to achieve but highly relevant in
modeling, simulation and optimization \citep{DisserRehberg2019vR}. In future work, the method developed here can be adapted to provide well-posedness for systems that include coupled diffusion and transport along active interfaces. 
\item The quasi-linear structure of \eqref{eq:plus} – \eqref{eq:Gamma}
may appear after a change of coordinates that transforms a free boundary
problem to a fixed domain, \citep{PruessSimonett2016}. 
\item Here, global existence and uniqueness is shown including semilinear
$f,g,h$ as long as boundedness is preserved. This includes, for example,
driving mechanisms of Allen-Cahn-type associated to phase separation,
cf.\,Corollary \ref{rhsmp} and the example in \eqref{eq:AllenCahnEx}. The result here applies if phase-separation occurs both in a bulk and along a (lower-dimensional) surface part of the material, coupled by (non-linear) transmission terms. Well-posedness for Allen-Cahn equations with dynamic interface conditions was shown in \citep{ColliSprekels2015}.
\end{itemize}

\subsection{Functional analytic setting }

We use that system \eqref{eq:plus} – \eqref{eq:Gamma} can be recast
as a quasilinear abstract Cauchy problem of the form 
\begin{align*}
\dot{u}(t)+\mathcal{A}_{u(t)}u(t) & =\mathcal{F}(u(t)),\\
u(0) & =u^{0},
\end{align*}
in the space $\mathrm{W}_0^{-1,q,q_{\Gamma}}$ of functionals on 
\[
\mathrm{W}^{1,q,q_{\Gamma}}=W^{1,q'}(\Omega_{+})\times W^{1,q'}(\Omega_{-})\times W^{1,q'_{\Gamma}}(\Gamma)
\]
with $q>d$, $q_{\Gamma}>2$ in spatial dimensions $d=2,3$, where
$\mathcal{A}_{u(t)}$ is a second-order divergence-form elliptic operator
(details in Subsection \ref{operator}). The main result is that this
problem is globally well-posed. This may seem like a special choice
of spaces at first, but actually, there is a good reason for this
choice in that the quasilinear map 
\begin{equation}\label{Lip}
\mathrm{W}^{1,q,q_{\Gamma}}\ni w\mapsto\mathrm{L}^{\infty,\infty}\ni w\mapsto k(w)\in\mathrm{L}^{\infty,\infty}\mapsto\mathcal{A}_{w}\in\mathcal{B}(\mathrm{W}^{1,q,q_{\Gamma}},\mathrm{W}_0^{-1,q,q_{\Gamma}})
\end{equation}
is well-defined, locally Lipschitz, and compact (estimate \eqref{eq:LS} and
Lemma \ref{Aiso}). In the usual weak setting with $w\in\mathrm{W}^{1,2,2}$,
without the first embedding, the second mapping is not Lipschitz,
so local well-posedness is less clear. On the other hand, if $\mathrm{W}_0^{-1,q,q}$
is replaced by a smaller space like $\mathrm{L}^{2,2}$ (the strong
setting), the third mapping is not bounded, so more than global $L^{\infty}$-estimates
are needed for global existence. \\
The $\mathrm{W}_0^{-1,q,q_{\Gamma}}$-setting is also well adapted to
non-smooth situations like non-smooth coefficients, non-smooth boundaries
and mixed boundary conditions. In \citep{DisKaiReh15OSR}, there is
a survey on results on the corresponding elliptic isomorphy that is
used for the quasilinear theory. Here, this means that it is not necessary
that all of the surface of the bulk domain is active and that local
well-posedness in this setting extends to mixed Dirichlet and Neumann
boundary conditions \citep{DisDIMX15} and that $\Omega_{+}$ and
$\Omega_{-}$ and the coefficients $k,m$ are allowed to be (spatially)
non-smooth to some extent. This is natural from a modelling point
of view, as the separation of a smooth domain by an interface, even
a plane, will usually create a kink. It can also be highly relevant
in semiconductor device modelling \citep{DisserRehberg2019vR} and
for catalysis modelling \citep{BSKM2017}.  A general advantage of
the quasilinear approach to \eqref{eq:plus} – \eqref{eq:Gamma} is
that it has a very good perturbation theory. It is a well-studied
problem how to include lower-order terms, time-dependence of coefficients
and external forces, cf.\,Subsection\,\ref{sub:Other-remarks}. For example, in \citep{DenkPrussZacher08}, local well-posedness of similar systems
is shown in an $L^{p}$-setting with smooth coefficients and geometries. \\
Here, in the proof of global well-posedness, a Schaefer fixed point argument is used to prove existence of solutions, and the local Lipschitzianity of the map \eqref{Lip} is used afterwards, to show uniqueness. This method needs an explanation as a maximal regularity approach with Lipschitzian nonlinearity is already taylored to a contraction mapping argument that provides both existence and uniqueness. The point here is that we are interested in global existence. The usual quasilinear maximal parabolic regularity approach provides well-posedness up to possible blow-up in the time-trace norm (the space of local semiflow). In the present weak setting, the trace space $\mathrm{X}^r_{q,q_\Gamma}$ of solutions $u \in C^0(J_T,\mathrm{X}^r_{q,q_\Gamma})$ embeds into the space of H\"older-continuous functions, $\mathrm{X}^r_{q,q_\Gamma} \hookrightarrow \mathrm{C}^{\beta,\beta_\Gamma}$ (Lemma \ref{lem:Cbb}), so for global existence, with the usual arguments,  (at least) global H\"older bounds would be needed. 
The method presented here circumvents the need for uniform higher-order a priori bounds by splitting existence and uniqueness proofs. To apply Schaefer's fixed point result, it is sufficient to know \emph{a posteriori} $L^\infty$-bounds, rather than \emph{a priori} bounds. This appears to be a technical simplification only, but it is a standard method used for quasilinear elliptic problems \citep{GilbargTrudinger}, so it may in principle be rewarding for parabolic systems as well.  \\
Regarding the nonlinearities in $f,g,h$ in the system  \eqref{eq:plus} – \eqref{eq:Gamma}, the need for global $L^{\infty}$-estimates can be a severe restriction, in particular, 
on reaction-diffusion problems. On the other hand, it is not a stronger
condition than what is used in the semilinear theory \citep{BFL18vsrd,FKMMN2016BulkSurface,SharmaMorgan17bsrd},
so the main result here can be applied in these situations. The boundedness-by-entropy
method developed by Jüngel \citep{Juengel15BBE} shows that there
is a connection between the entropic gradient structures of a system and
$L^{\infty}$-estimates. A direct argument for this in a simple nonlinear
diffusion-reaction case was also made in \citep{Disser19}. The proof
of $L^{\infty}$-bounds, equilibration and exponential rates for system
\eqref{eq:plus} – \eqref{eq:Gamma}, Lemma \ref{mp} and Theorem
\ref{thm:convToEq}, make exact use of the gradient structure of the
bulk-interface interaction in that for convex energies, the linearization
of the dual dissipation potentials acts like a discrete gradient on
the components. \\


\subsection*{Outline.}

The paper is organized as follows. The next section contains basic
assumptions on the geometry and coefficient functions in \eqref{eq:plus}
– \eqref{eq:Gamma} and collects preliminary results on the bilinear
form and linearized operator associated to the system. In Section
\ref{sec:The-Sesquilinear-form}, the main result on existence and
uniqueness of global solutions is proved. In Section \ref{stability},
the bulk-interface Poincaré inequality and exponential stability for
the global equilibrium under mass conservation are shown. In the last
Section \ref{sec:Extensions-and-concluding}, the relation of the
model to the entropic Onsager system of heat diffusion and transfer
derived in \citep{Miel13BII} are discussed and extensions of the
main results like the case of $\om=\emptyset$, higher regularity,
dependence of coefficients on time, and the inclusion of lower-order
perturbations are given. 

\section{Basic assumptions and functional analytic framework \label{sec:Ass}}

\subsection{Assumptions on geometry and coefficients\label{sub:AssOmega}}

\begin{assumption}\label{ass1} The bulk domains $\op$ and $\om$
are bounded Lipschitz domains \citep[Def. 1.2.12]{grisvard85}. The
interface $\Gamma$ is a $d-1$-dimensional $C^{1}$-part of the boundary
of both $\Omega_{+}$ and $\Omega_{-}$ with Lipschitz boundary $\partial\Gamma$
if $d=3$. 
\end{assumption}

For $q\in[1,\infty]$, $L^{q}(\omega)$ denotes the usual real Lebesgue
space of $q$-integrable functions on a domain or manifold $\omega$,
$W^{m,q}(\omega)$ denote the usual $L^{q}$-Sobolev spaces of order
$m\in\mathbb{N}$ and $C^{\alpha}(\omega)$ are the uniform Hölder
spaces of exponent $\alpha\geq0$ with $C^{0}(\omega)=C(\overline{\omega})$
if $\omega$ is bounded. Function spaces related to \eqref{eq:plus}
– \eqref{eq:Gamma} are: for $q,q_{\Gamma}\in[1,\infty]$, $\alpha,\alpha_{\Gamma}\geq0$
and $\mathcal{H}_{d-1}$ the $d-1$-dimensional Hausdorff measure
on $\Gamma$, 
\[
\mathrm{L}^{q,q_{\Gamma}}:=L^{q}(\op)\times L^{q}(\om)\times L^{q_{\Gamma}}(\Gamma),
\]
\[
\mathrm{W}^{1,q,q_{\Gamma}}:=W^{1,q}(\op)\times W^{1,q}(\om)\times W^{1,q_{\Gamma}}(\Gamma),\quad\text{and}
\]
\[
\mathrm{C}^{\alpha,\alpha_{\Gamma}}:=C^{\alpha}(\op)\times C^{\alpha}(\om)\times C^{\alpha_{\Gamma}}(\Gamma),
\]
where $W^{1,q}(\Gamma)$ is defined through the standard notion of
weak differentiability on the manifold $\Gamma$ that preserves embedding
and trace theorems, integration by parts and Poincaré inequalities.

The trace operator 
\begin{equation}
\mathrm{tr}_{\Gamma}:W^{1,q}(\op)\to L^{q}(\Gamma)\label{eq:trace operator}
\end{equation}
is well-defined and continuous (likewise for $\om$). Set
\begin{equation}
\mathrm{tr}_{\Gamma}\,u=(\tr u_{+},\tr u_{-},u_{\Gamma})\label{eq:trace}
\end{equation}
for the trace components of $u$ on the interface $\Gamma$. Often
we omit the operator $\tr$ (like in the statement of the model equations
\eqref{eq:Gamma} on $\Gamma$). Dual Sobolev spaces are
denoted by
\[
W_0^{-1,q}(\omega):=(W^{1,q'}(\omega))^{'}\;\text{and }\;\mathrm{W}_0^{-1,q,q_{\Gamma}}:=(\mathrm{W}^{1,q',q'_{\Gamma}})^{'}
\]
with $\frac{1}{q}+\frac{1}{q'}=\frac{1}{q_{\Gamma}}+\frac{1}{q'_{\Gamma}}=1$.
For $-\infty<l\leq L<+\infty$ and $n\in\mathbb{N}$, let 
\[
(\mathbb{R}_{l}^{L})^{n}:=\{v\in\mathbb{R}^{n}:l\leq v_{i}\leq L\,\text{ for }i=1,\dots,n\},\quad\text{and}
\]
\[
\mathrm{C}_{l}^{L}=\{u\in\mathrm{C}^{0,0}:l\leq u_{\pm}(x),\tg(y)\leq L\,\text{ for all }x\in\Omega_{\pm},y\in\Gamma\}.
\]

\begin{assumption}\label{Akm}Let $k$ and $m$ be given as in (\ref{eq:K})
and (\ref{eq:M}) and let $-\infty<l<L<+\infty$ be given constants. 
\begin{enumerate}
\item \label{akm1} Uniformly in $u\in(\mathbb{R}_{l}^{L})^{3}$, the coefficient
matrices $k(\cdot,u)$ are measurable, bounded and elliptic, i.e.
there are constants $\underline{k},\overline{k}>0$ such that 
\begin{equation}
\Vert k(\cdot,u)\Vert_{\mathrm{L}^{\infty}}\leq\overline{k},\label{bdd}
\end{equation}
and such that for all $x\in\mathbb{R}^{d}$, $y\in\mathbb{R}^{d-1}$,
\begin{equation}
x\cdot k_{\pm}(\cdot,u)x\geq\underline{k}|x|^{2}\quad\text{and}\quad y\cdot k_{\Gamma}(\cdot,u)y\geq\underline{k}|y|^{2},\label{ell}
\end{equation}
almost everywhere in $\Omega_{\pm}$, $\Gamma$. In particular, $\underline{k}$
and $\overline{k}$ may depend on $l,L$, but not on $u\in(\mathbb{R}_{l}^{L})^{3}$.
\item \label{akm2} Uniformly in $u\in(\mathbb{R}_{l}^{L})^{3}$, the transmission
coefficients $m_{\pm}$,$m_{\Gamma}$ are measurable and there are
constants $\underline{m},\overline{m}>0$ such that 
\begin{equation}
\Vert m(\cdot,u)\Vert_{L^{\infty}(\Gamma)}\leq\overline{m}\label{mbdd}
\end{equation}
and such that at least two of the three transmission functions, e.g.
$m_{+},m_{\Gamma}$ are positively bounded from below, 
\begin{equation}
\underline{m}\leq m_{+}(\cdot,u),m_{\Gamma}(\cdot,u),\label{mell}
\end{equation}
and the third transmission function is non-negative, 
\[
0\leq m_{-}(\cdot,u),
\]
almost everywhere in $\Gamma$. Note that again, $\underline{m},\overline{m}$
may depend on $l,L$, but not on $u\in(\mathbb{R}_{l}^{L})^{3}$.
\item \label{akm3} The functions $\mathbb{R}\ni u_{\pm}\mapsto k_{\pm}(x,u_{\pm})$,
$\mathbb{R}^{3}\ni u\mapsto k_{\Gamma}(y,u)$ and $\mathbb{R}^{3}\ni u\mapsto m(y,u)$
are locally Lipschitz uniformly in $y\in\Gamma,x\in\Omega_{\pm}$. 
\item \label{akm4} If $d=3$, then $k_{\pm}$ are of the form $k_{\pm}(x,u_{\pm})=\kappa_{\pm}(x,u_{\pm})\varkappa_{\pm}(x)$.
The functions $\kappa_{\pm}\colon\Omega_{\pm}\times\mathbb{R}\to\mathbb{R}$
are scalar, satisfy \ref{Akm}\eqref{akm3} and for all $u_{\pm}\in\mathbb{R}$,
we have $\kappa_{\pm}(\cdot,u_{\pm})\in C^{0}(\Omega_{\pm})$ with
$\underline{k}\leq\kappa_{\pm}(\cdot,u_{\pm})$. 
The functions $\varkappa_{\pm}\colon\Omega_{\pm}\to\mathbb{R}_{\geq0}^{3\times3}$
satisfy \ref{Akm}\eqref{akm1} and are uniformly continuous on $\Omega_{\pm}$. 
\end{enumerate}
\end{assumption} 
 Some examples of coefficients $k,m$ that satisfy Assumption \ref{Akm}
are in Subsections \ref{sub:Onsager-Model} and \ref{sub:PDE}. 
Assumption \ref{Akm}\eqref{akm4} is used to guarantee that if $d=3$, the non-autonomous operator $\mathcal{A}_{u(t)}$ that defines the system, does not change its domain of definition, see Lemma \ref{Aiso}. Note that 
the condition $\varkappa_{\pm}\in C(\overline{\Omega}_{\pm})^{3\times3}$
may be relaxed considerably,
e.g. to hold only piecewise on layers. 
For a detailed discussion of necessary and sufficient conditions for
this property, we refer to \citep{DisKaiReh15OSR}. 

\subsection{Bilinear form and elliptic operator \label{operator} }

The dissipation in \eqref{eq:plus} – \eqref{eq:Gamma} across $\Gamma$
is governed by the transmission coefficient matrix $\mathbf{m}$ given
by 
\[
\mathbf{m}=\left(\begin{array}{ccc}
\mp+\mg & -\mg & -\mp\\
-\mg & \mm+\mg & -\mm\\
-\mp & -\mm & \mp+\mm
\end{array}\right).
\]
With Assumption \ref{Akm}\eqref{akm2}, $\mathbf{m}$ is positive
semi-definite and for $r=(r_{+},r_{-},r_{\Gamma})\in\mathbb{R}^{3}$,
\begin{equation}
r\cdot\mathbf{m}r=0\;\text{a.e., if and only if }\;r_{+}=r_{-}=r_{\Gamma}.\label{eq:kerMu}
\end{equation}
Let $-\infty<l\leq L<+\infty$. For fixed $u\in\mathrm{C}_{l}^{L}$,
define the bilinear form 
\[
\mathfrak{a}_{u}:\mathrm{W}^{1,2,2}\times\mathrm{W}^{1,2,2}\to\mathbb{R}
\]
by 
\[
\mathfrak{a}_{u}(\psi,\varphi):=\mathfrak{l}_{u}(\psi,\varphi)+\mathfrak{m}_{u}(\psi,\varphi),
\]
where 
\[
\begin{array}{rcl}
\mathfrak{l}_{u}(\psi,\varphi) & := & \int_{\op}\nabla\psi_{+}\cdot k_{+}(u_{+})\nabla\varphi_{+}\,\mathrm{d}x+\int_{\om}\nabla\psi_{-}\cdot k_{-}(u_{-})\nabla\varphi_{-}\,\mathrm{d}x\\[2mm]
 &  & +\int_{\Gamma}\nabla_{\Gamma}\psi_{\Gamma}\cdot k_{\Gamma}(u)\nabla_{\Gamma}\varphi_{\Gamma}\,\mathrm{d}\mathcal{H}_{d-1},\\[2mm]
 & =: & \mathfrak{l}_{u,+}(\psi_{+},\varphi_{+})+\mathfrak{l}_{u,-}(\psi_{-},\varphi_{-})+\mathfrak{l}_{u,\Gamma}(\psi_{\Gamma},\varphi_{\Gamma}),
\end{array}
\]
and 
\[
\mathfrak{m}_{u}(\psi,\varphi)=\int_{\Gamma}\tr\psi\cdot\mathbf{m}\tr\varphi\,\mathrm{d}\mathcal{H}_{d-1}.
\]
By (\ref{eq:trace operator}), the form $\mathfrak{a}_{u}$ is well-defined
and continuous. Due to (\ref{eq:kerMu}) and Assumption \ref{Akm},
\begin{equation}
\mathfrak{a}_{u}(\varphi,\varphi)\geq0\quad\text{and}\quad\mathfrak{a}_{u}(\varphi,\varphi)=0\;\text{if and only if }\;\varphi_{+}=\varphi_{-}=\varphi_{\Gamma}\equiv\mathrm{const}.\label{eq:kera}
\end{equation}
The form $\mathfrak{a}_{u}$ induces an operator $\mathcal{A}_{u}:\mathrm{W}^{1,2,2}\to\mathrm{W}_0^{-1,2,2}$
by 
\[
\mathcal{A}_{u}(\psi)(\varphi):=\mathfrak{a}_{u}(\psi,\varphi),\quad\text{for all }\psi,\varphi\in W^{1,2,2}.
\]
For $q,q_{\Gamma}\in[2,\infty)$, let $\mathcal{A}_{u}^{q,q_{\Gamma}}$
be the maximal restriction of $\mathcal{A}_{u}$
to $\mathrm{W}_0^{-1,q,q_{\Gamma}}$ with
\[
\mathrm{dom}(\mathcal{A}_{u}^{q,q_{\Gamma}}) = \{ \psi \in \mathrm{W}_0^{-1,q,q_{\Gamma}} \cap \mathrm{W}^{1,2,2} : \mathcal{A}_u\psi \in \mathrm{W}_0^{-1,q,q_{\Gamma}}\},
\]
and $\mathcal{A}_{u}^{q,q_{\Gamma}}= \mathcal{A}_u|_{\mathrm{dom}(\mathcal{A}_{u}^{q,q_{\Gamma}})}$. It will be shown in Lemma \ref{Aiso} below that $\mathrm{dom}(\mathcal{A}_{u}^{q,q_{\Gamma}}) = \mathrm{W}^{1,q,q_{\Gamma}}$. 
 Let $\mathcal{L}_{u}^{q,q_{\Gamma}}$
be the divergence operator in $\mathrm{W}_0^{-1,q,q_{\Gamma}}$ analogously
induced by $\mathfrak{l}_{u}$ and let $\mathcal{L}_{u,+}^{q,q_{\Gamma}}$,
$\mathcal{L}_{u,-}^{q,q_{\Gamma}}$ and $\mathcal{L}_{u,\Gamma}^{q,q_{\Gamma}}$
be the Neumann operators induced by $\mathfrak{l}_{u,+}$, $\mathfrak{l}_{u,-}$
and $\mathfrak{l}_{u,\Gamma}$ on the domains $\op$, $\om$ and $\Gamma$,
respectively. We write $\mathcal{M}_{u}^{q,q_{\Gamma}}$ for the bounded
transmission operator given by 
\begin{equation}
\mathcal{M}_{u}^{q,q_{\Gamma}}(\psi)(\varphi):=\mathfrak{m}_{u}(\psi,\varphi),\qquad\psi\in\mathrm{dom}(\mathcal{L}_{u}^{q,q_{\Gamma}}),\varphi\in\mathrm{W}^{1,q',q'_{\Gamma}},\label{eq:defM}
\end{equation}
so that 
\[
\mathcal{A}_{u}^{q,q_{\Gamma}}=\mathcal{L}_{u}^{q,q_{\Gamma}}+\mathcal{M}_{u}^{q,q_{\Gamma}}.
\]
The external forces and inhomogeneous Neumann boundary conditions
$f,g,h$ in \eqref{eq:plus}–\eqref{eq:Gamma} are realized as a $\mathrm{W}_0^{-1,q,q_{\Gamma}}$-functional
$\mathcal{F}(u)$ with components $\mathcal{F}_{+}(u)\in W_0^{-1,q}(\op)$,
$\mathcal{F}_{-}(u)\in W_0^{-1,q}(\om)$ and $\mathcal{F}_{\Gamma}(u)\in W_0^{-1,q_{\Gamma}}$
given by 
\begin{align*}
\mathcal{F}_{+}(u)(\varphi_{+}) & =\int_{\op}f_{+}(u_{+})\varphi_{+}\,\mathrm{d}x+\int_{\Gamma}g_{+}(u)\tr\varphi_{+}\,\mathrm{d}\mathcal{H}_{d-1}+\int_{\partial\op\setminus\Gamma}h_{+}(u_{+})\mathrm{tr}_{\partial\op\setminus\Gamma}\,\varphi_{+}\,\mathrm{d}\mathcal{H}_{d-1}\\
\mathcal{F}_{-}(u)(\varphi_{-}) & =\int_{\om}f_{-}(u_{-})\varphi_{-}\,\mathrm{d}x+\int_{\Gamma}g_{-}(u)\tr\varphi_{-}\,\mathrm{d}\mathcal{H}_{d-1}+\int_{\partial\om\setminus\Gamma}h_{-}(u_{-})\mathrm{tr}_{\partial\om\setminus\Gamma}\,\varphi_{-}\,\mathrm{d}\mathcal{H}_{d-1}\\
\mathcal{F}_{\Gamma}(u)(\varphi_{\Gamma}) & =\int_{\Gamma}f_{\Gamma}(u)\varphi_{\Gamma}\,\mathrm{d}\mathcal{H}_{d-1}+\int_{\partial\Gamma}h_{\Gamma}(u_{\Gamma})\mathrm{tr}_{\partial\Gamma}\,\varphi_{\Gamma}\,\mathrm{d}\mathcal{H}_{d-2},
\end{align*}
for all $\varphi\in\mathrm{W}^{1,q',q'_{\Gamma}}$. Using trace and
embedding results, it follows that $\mathcal{F}(u)$ is well-defined,
if
\[
f(u)\in\mathrm{L}^{p,p_{\Gamma}},\text{ and }g_{\pm}(u),h_{\pm}(u)\in L^{\rho}(\Gamma),
\]
where 
\[
p>\frac{d}{d+1-\frac{d}{q'}},p_{\Gamma}>\frac{d-1}{d-\frac{d-1}{q_{\Gamma}'}}\text{ and }\rho>\frac{d-1}{d-\frac{d}{q'}}.
\]
If $d=2$, then $p_{\Gamma}>1$ is sufficient. If $d=3$, then $h_{\Gamma}(u_{\Gamma})\in L^{\rho_{\Gamma}}(\partial\Gamma)$
with $\rho_{\Gamma}>1$ is a sufficient condition. With the assumptions
in this section, the system (\ref{eq:plus}), (\ref{eq:minus}) and
(\ref{eq:Gamma}) can be considered as the quasilinear abstract Cauchy
problem 
\begin{equation}
\dot{u}(t)+\mathcal{A}_{u(t)}u(t)=\mathcal{F}(u(t))\in\mathrm{W}_0^{-1,q,q_{\Gamma}},\quad u(0)=u_{0},\label{eq:qlACP}
\end{equation}
for $q,q_{\Gamma}\geq2$.

\subsection{Maximal parabolic regularity and embedding theorems}

\label{maxreg}

In the proof of the main result, we use non-autonomous maximal parabolic
regularity of $\mathcal{A}_{u(\cdot)}^{q,q_{\Gamma}}$ in $\mathrm{W}_0^{-1,q,q_{\Gamma}}$.
To make this more precise, this subsection contains some definitions
and preliminary results. For $T>0$, let in the following $J_{T}=\left(0,T\right)$.
For two Banach spaces $X,Y$ that form an interpolation couple, $(X,Y)_{\theta,p}$
denotes the real interpolation spaces with parameters $\theta\in(0,1)$,
$p\in[1,\infty]$. \begin{definition} Let $1<r<\infty$ , let $X$
be a Banach space and assume that $B$ is a closed operator in $X$
with dense domain $\mathrm{dom}(B)\subset X$, equipped with the graph
norm. We say that $B$ satisfies maximal $L^{r}(J_{T};X)$-regularity
if for all $u^{0}\in(\mathrm{dom}(B),X)_{1-\frac{1}{r},r}$ and $f\in L^{r}(0,T;X)$
there is a unique solution 
\[
u\in L^{r}(J_{T};\mathrm{dom}(B))\cap W^{1,r}(J_{T};X)
\]
of the abstract Cauchy problem 
\[
\left\{ \begin{array}{rcll}
\dot{u}+Bu & = & f,\\
u(0) & = & u^{0},
\end{array}\right.
\]
posed in $X$, satisfying 
\[
\Vert\dot{u}\Vert_{L^{r}(J_{T};X)}+\Vert Bu\Vert_{L^{r}(J_{T};X)}\leq C(\Vert u^{0}\Vert_{(\mathrm{dom}(B),X)_{1-\frac{1}{r},r}}+\Vert f\Vert_{L^{r}(J_{T};X)})
\]
with a constant $C>0$ independent of $u^{0}$ and $f$ (see e.g.
\citep[Ch. III.1]{Amann95}). \end{definition} Note that the notion
of maximal $L^{r}(J_{T};X)$-regularity is independent of $1<r<\infty$
and $T>0$, cf. \citep{Dore2000}. In the following, for $q,q_{\Gamma}\geq2$,
$1<r<\infty$ and given $u\in\mathrm{C}^{0,0}$, we consider maximal
regularity of $\mathcal{A}_{u}^{q,q_{\Gamma}}$. Thus, 
\[
\mathrm{MR}_{q,q_{\Gamma}}^{r}:=L^{r}(J_{T};\mathrm{dom}(\mathcal{A}_{u}^{q,q_{\Gamma}}))\cap W^{1,r}(J_{T};\mathrm{W}_0^{-1,q,q_{\Gamma}})
\]
is the corresponding solution space and 
\[
\mathrm{X}_{q,q_{\Gamma}}^{r}:=(\mathrm{dom}(\mathcal{A}_{u}^{q,q_{\Gamma}}),\mathrm{W}_0^{-1,q,q_{\Gamma}})_{1-\frac{1}{r},r}
\]
is the corresponding time trace space.

In Lemma \ref{Aiso} below, it is shown that there are $q>d$, $q_{\Gamma}>d-1$,
such that $\mathrm{dom}(\mathcal{A}_{u}^{q,q_{\Gamma}})=\mathrm{W}^{1,q,q_{\Gamma}}$.
The following lemma summarizes useful embeddings for the corresponding
function spaces.

\begin{lemma} \label{lem:Cbb} If $\mathrm{dom}(\mathcal{A}_{u}^{q,q_{\Gamma}})=\mathrm{W}^{1,q,q_{\Gamma}}$,
then 
\begin{enumerate}
\item for $\alpha\leq1-\frac{d}{q}$ and $\alpha_{\Gamma}\leq1-\frac{d-1}{q_{\Gamma}}$,
\begin{equation}
\mathrm{dom}(\mathcal{A}_{u}^{q,q_{\Gamma}})\hookrightarrow\mathrm{C}^{\alpha,\alpha_{\Gamma}},\label{eq:alpha}
\end{equation}
\item for any $1<r<\infty$, 
\begin{equation}
\mathrm{MR}_{q,q_{\Gamma}}^{r}\hookrightarrow C^{0}(J_{T};\mathrm{X}_{q,q_{\Gamma}}^{r}).\label{eq:timetrace}
\end{equation}
If $q>d$, $q_{\Gamma}>d-1$, and $r>\max(\frac{2q}{q-d},\frac{2q_{\Gamma}}{q_{\Gamma}-d+1})$,
then 
\begin{equation}
\mathrm{X}_{q,q_{\Gamma}}^{r}\hookrightarrow\mathrm{C}^{\beta,\beta_{\Gamma}},\label{eq:beta}
\end{equation}
where $0<\beta\leq1-\frac{d}{q}-\frac{2}{r}$ and $0<\beta_{\Gamma}\leq1-\frac{d-1}{q_{\Gamma}}-\frac{2}{r}$. 
\item for $q>d$, $q_{\Gamma}>d-1$, let $0<\delta<\min(\frac{q-d}{2q},\frac{q_{\Gamma}-d+1}{2q_{\Gamma}})$
and $r>\max(\frac{2q}{q-2\delta q-d},\frac{2q_{\Gamma}}{q_{\Gamma}-2\delta q_{\Gamma}-d+1})$,
then 
\begin{equation}
\mathrm{MR}_{q,q_{\Gamma}}^{r}\hookrightarrow C^{\delta}(J_{T};\mathrm{C}^{\gamma,\gamma_{\Gamma}})\label{eq:gamma}
\end{equation}
with $0<\gamma\leq1-\frac{d}{q}-\frac{2}{r}-2\delta$ and $0<\gamma_{\Gamma}\leq1-\frac{d-1}{q_{\Gamma}}-\frac{2}{r}-2\delta$.
In particular, the embedding 
\begin{equation}
\mathrm{MR}_{q,q_{\Gamma}}^{r}\hookrightarrow C^{0}(J_{T};\mathrm{C}^{0,0})\label{eq:compact}
\end{equation}
is compact. 
\end{enumerate}
\end{lemma}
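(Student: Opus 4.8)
The plan is to dispatch the three assertions in turn; each reduces, after inserting the hypothesis $\mathrm{dom}(\mathcal{A}_u^{q,q_\Gamma})=\mathrm{W}^{1,q,q_\Gamma}$, to a combination of Morrey/Besov embeddings and abstract trace theorems for maximal-regularity intersection spaces.

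For part (1) I would simply decouple. Since the domain splits as $W^{1,q}(\op)\times W^{1,q}(\om)\times W^{1,q_\Gamma}(\Gamma)$, the claimed embedding factors into its three components. On the two bounded Lipschitz bulk domains (Assumption \ref{ass1}) the Morrey--Sobolev embedding gives $W^{1,q}(\omega)\hookrightarrow C^{1-d/q}(\overline{\omega})$ for $q>d$, and on the $(d-1)$-dimensional $C^1$-manifold $\Gamma$ the same embedding yields $W^{1,q_\Gamma}(\Gamma)\hookrightarrow C^{1-(d-1)/q_\Gamma}(\Gamma)$ for $q_\Gamma>d-1$. Taking the product and using monotonicity of H\"older spaces in the exponent produces \eqref{eq:alpha}.

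For part (2), the embedding \eqref{eq:timetrace} is the standard time-trace theorem for intersection spaces: every element of $W^{1,r}(J_T;\mathrm{W}^{-1,q,q_\Gamma})\cap L^r(J_T;\mathrm{dom}(\mathcal{A}_u^{q,q_\Gamma}))$ has a representative in $C^0(\overline{J_T};X^r_{q,q_\Gamma})$ with continuous embedding (cf. \cite{Amann95}). For \eqref{eq:beta} I would identify the interpolation space: inserting $\mathrm{dom}(\mathcal{A}_u^{q,q_\Gamma})=\mathrm{W}^{1,q,q_\Gamma}$, real interpolation of this first-order Sobolev space against the dual space $\mathrm{W}^{-1,q,q_\Gamma}$ yields a product of Besov spaces of differentiability order $1-2/r$, so that $X^r_{q,q_\Gamma}\hookrightarrow B^{1-2/r}_{q,r}(\op)\times B^{1-2/r}_{q,r}(\om)\times B^{1-2/r}_{q_\Gamma,r}(\Gamma)$. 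The Besov--H\"older embedding $B^{s}_{q,r}(\omega)\hookrightarrow C^{s-d/q}(\overline{\omega})$, valid when $s-d/q>0$, then gives \eqref{eq:beta} with $\beta=1-2/r-d/q$ and $\beta_\Gamma=1-2/r-(d-1)/q_\Gamma$; positivity of these exponents is exactly the stated lower bounds on $r$.

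For part (3) I would trade time-regularity for space-regularity and then use compactness. Writing $X=\mathrm{W}^{-1,q,q_\Gamma}$ and $D=\mathrm{dom}(\mathcal{A}_u^{q,q_\Gamma})=\mathrm{W}^{1,q,q_\Gamma}$, the mixed-derivative theorem (available since $\mathcal{A}_u^{q,q_\Gamma}$ has maximal parabolic regularity, so bounded imaginary powers/$\mathcal{R}$-sectoriality are in force) gives, for $0\le\sigma\le 1$, an embedding of $\mathrm{MR}^r_{q,q_\Gamma}$ into $W^{\sigma,r}(J_T;Y_\sigma)$, where $Y_\sigma$ is the real interpolation space between $X$ and $D$ of differentiability order $1-2\sigma$ (order $1$ at $\sigma=0$, order $-1$ at $\sigma=1$). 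Choosing $\sigma$ slightly above $\delta+\tfrac1r$ makes the Sobolev embedding $W^{\sigma,r}(J_T;Y_\sigma)\hookrightarrow C^\delta(J_T;Y_\sigma)$ hold, while $Y_\sigma$ is then a product of Besov spaces of order close to $1-2\delta-2/r$, which embeds into $\mathrm{C}^{\gamma,\gamma_\Gamma}$ exactly as in part (2); the parabolic scaling (each unit of time regularity costs two units of space regularity) accounts for the extra $2\delta$ loss, and the constraints on $\delta$ and $r$ are precisely $\gamma,\gamma_\Gamma>0$. Finally \eqref{eq:compact} follows from \eqref{eq:gamma}: with $\delta,\gamma,\gamma_\Gamma>0$ and $\mathrm{C}^{\gamma,\gamma_\Gamma}\hookrightarrow\mathrm{C}^{0,0}$ compact (Arzel\`a--Ascoli on the bounded sets $\overline{\op},\overline{\om},\Gamma$), a vector-valued Arzel\`a--Ascoli argument makes $C^\delta(J_T;\mathrm{C}^{\gamma,\gamma_\Gamma})\hookrightarrow C^0(J_T;\mathrm{C}^{0,0})$ compact, hence so is the composition.

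The genuinely delicate point, in both part (2) and part (3), is the interpolation identification: because the base space $\mathrm{W}^{-1,q,q_\Gamma}$ is a dual space and $\op,\om$ are only Lipschitz, the identity for $(\mathrm{W}^{1,q,q_\Gamma},\mathrm{W}^{-1,q,q_\Gamma})_{\theta,r}$ is not automatic and must be justified through the low-regularity interpolation theory for Sobolev spaces on Lipschitz domains, on which I would rely on the cited framework. Once that identification is granted, everything reduces to bookkeeping of exponents under the Morrey, Besov, mixed-derivative, and Arzel\`a--Ascoli embeddings.
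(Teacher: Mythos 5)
Your parts (1) and (2) run along the same lines as the paper's own proof: the componentwise Morrey embedding for \eqref{eq:alpha} (the paper cites \cite[2.8.1(c)]{Triebel95}), the Amann time-trace theorem for \eqref{eq:timetrace} (\cite[Section III.4.10]{Amann95}), and, for \eqref{eq:beta}, the identification of $X^r_{q,q_\Gamma}$ via real interpolation as a product of Besov spaces of differentiability order $1-\frac{2}{r}$ (\cite[p.~186, (14)]{Triebel95}) followed by the Besov--H\"older embedding \cite[2.8.1]{Triebel95}. The ``delicate point'' you flag at the end is handled in the paper exactly as you anticipated: $\op$, $\om$ are Lipschitz and hence Sobolev extension domains, and $\Gamma$ is a $C^1$-manifold, so the interpolation identities work ``as in the whole space''.

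In part (3) you take a genuinely different route, and it contains one inaccurate claim that should be repaired. You justify the mixed-derivative theorem by asserting that maximal parabolic regularity puts ``bounded imaginary powers/$\mathcal{R}$-sectoriality in force''. In the UMD space $\mathrm{W}^{-1,q,q_\Gamma}$, maximal regularity is indeed equivalent to $\mathcal{R}$-sectoriality (Weis), but it does \emph{not} imply bounded imaginary powers, and the mixed-derivative theorem in its standard (Sobolevskii/Dore--Venni) form requires BIP or a bounded $H^\infty$-calculus, neither of which has been established for $\mathcal{A}^{q,q_\Gamma}_u$ here. The step can be fixed without any BIP: the intersection-space structure alone yields a Di Blasio/Amann-type embedding $W^{1,r}(J_T;X)\cap L^r(J_T;D)\hookrightarrow C^{1-\theta-\frac1r}\bigl(J_T;(X,D)_{\theta,1}\bigr)$, and this is precisely what the paper uses, citing \cite[Lemma~3.4(b)]{DRtE15Hoel} to get $\mathrm{MR}^r_{q,q_\Gamma}\hookrightarrow C^\delta\bigl(J_T;(\mathrm{W}^{-1,q,q_\Gamma},\mathrm{W}^{1,q,q_\Gamma})_{\theta,1}\bigr)$ for $0<\theta\leq 1-\frac1r-\delta$; the same Triebel interpolation and embedding bookkeeping as in part (2) then gives \eqref{eq:gamma} with exactly your exponents, since $(X,D)_{\theta,1}$ has order $2\theta-1\leq 1-\frac2r-2\delta$. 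What your route would buy, if BIP were available, is fractional time-regularity in the $W^{\sigma,r}$ scale rather than the H\"older scale, but for this lemma that is more machinery than needed. Your compactness argument for \eqref{eq:compact} (H\"older-in-time with values in a compactly embedded H\"older space, then vector-valued Arzel\`a--Ascoli) is correct and is what the paper's ``in particular'' tacitly relies on. With the mixed-derivative step replaced by the real-interpolation embedding, your proof is complete.
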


\begin{proof} Note that $\op,\om$ and $\Gamma$ are sufficiently
regular for embedding and interpolation results to work ``as usual''.
The first embedding \eqref{eq:alpha} is standard, cf. e.g.\,\citep[2.8.1(c)]{Triebel95}.
For embedding \eqref{eq:timetrace}, cf.\,\citep[Section III.4.10]{Amann95}.
Embedding \eqref{eq:beta} follows by definition of $\mathrm{X}_{q,q_{\Gamma}}^{r}$,
combining e.g.\, the interpolation result \citep[p. 186, (14)]{Triebel95}
and the embedding \citep[2.8.1]{Triebel95}. From \citep[Lemma 3.4(b)]{DRtE15Hoel},
it follows that 
\[
\mathrm{MR}_{q,q_{\Gamma}}^{r}\hookrightarrow C^{\delta}(J_{T};(\mathrm{W}_0^{-1,q,q_{\Gamma}},\mathrm{W}^{1,q,q_{\Gamma}})_{\theta,1})
\]
with $0<\theta\leq1-\frac{1}{r}-\delta$. Embedding \eqref{eq:gamma}
then follows again by combining \citep[p. 186, (14)]{Triebel95} and
\citep[2.8.1]{Triebel95}. \end{proof} For uniqueness, an assumption
on the dependence of $\mathcal{F}$ of $u$ is needed. By embedding,
the assumption is satisfied, e.g. if the dependence of $\mathcal{F}$
of $u$ in an $L^{p}$-norm is locally Lipschitz (for example, it
is straightforward to see that semilinear terms of reaction-diffusion
type with arbitrary powers are included \citep{Disser19}). \begin{assumption}\label{Lipschitz}
Given $1<r<\infty$ and $q,q_{\Gamma}>2$, the function $\mathcal{F}\colon\mathrm{X}_{q,q_{\Gamma}}^{r}\to\mathrm{W}_0^{-1,q,q_{\Gamma}}$
is boundedly Lipschitz in the sense that for all $\tilde{L}>0$, there
exists a constant $C_{\tilde{L}}>0$ such that for all $u_{1},u_{2}\in\mathrm{X}_{q,q_{\Gamma}}^{r}$
with $\Vert u_{1}\Vert_{\mathrm{X}_{q,q_{\Gamma}}^{r}},\Vert u_{2}\Vert_{\mathrm{X}_{q,q_{\Gamma}}^{r}}\leq\tilde{L}$,
\[
\Vert\mathcal{F}(u_{1})-\mathcal{F}(u_{2})\Vert_{\mathrm{W}_0^{-1,q,q_{\Gamma}}}\leq C_{\tilde{L}}\Vert u_{1}-u_{2}\Vert_{\mathrm{X}_{q,q_{\Gamma}}^{r}}.
\]
\end{assumption}

\section{Global existence and uniqueness \label{sec:The-Sesquilinear-form}}

The main result of the paper is global existence and uniqueness of
solutions of \eqref{eq:qlACP}. For local well-posedness, it is sufficient
that $\mathcal{F}$ satisfies Assumption \ref{Lipschitz}. For global
existence, it is required that $\mathcal{F}$ preserves the $L^{\infty}$-bounds
for \eqref{eq:qlACP}. This requirement on $\mathcal{F}$ is defined more precisely in Step \eqref{proof:mp} in the proof of Theorem \ref{thm:local}. Examples of $\mathcal{F}$ that satisfy these
assumptions are given in Corollary \ref{rhsmp} and it is shown that
this includes coupled volume-surface reaction-diffusion in \citep{Disser19}
(the case $\mathcal{F}=0$ corresponds to the original gradient structure).
\begin{thm} \label{thm:local}There exist $q>d$, $q_{\Gamma}>d-1$
such that for all $r>\max(\frac{2q}{q-d},\frac{2q_{\Gamma}}{q_{\Gamma}-d+1})$,
$u^{0}\in\mathrm{X}_{q,q_{\Gamma}}^{r}$, $T>0$ and $\mathcal{F}$
satisfying Assumption \ref{Lipschitz} and preserving $L^{\infty}$-bounds,
there is a unique global solution 
\[
u\in W^{1,r}(J_{T};\mathrm{W}_0^{-1,q,q_{\Gamma}})\cap L^{r}(J_{T};\mathrm{W}^{1,q,q_{\Gamma}})
\]
of (\ref{eq:qlACP}). 
In particular, the solution is Hölder continuous in time and space,
\[
u\in C^{\delta}(J_{T};\mathrm{C}^{\gamma,\gamma_{\Gamma}}),
\]
with $\delta,\gamma,\gamma_{\Gamma}$ as in Lemma \ref{lem:Cbb} and
(\ref{eq:qlACP}) is well-posed. \end{thm}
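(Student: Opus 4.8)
The plan is to recast the quasilinear problem \eqref{eq:qlACP} as a fixed point of the solution operator of the \emph{linearized, non-autonomous} equation and to invoke Schaefer's fixed point theorem, with the decisive a priori bound coming from the maximum principle. Fix $T>0$ and, using Lemma \ref{Aiso}, choose $q>d$, $q_\Gamma>d-1$ with $\mathrm{dom}(\mathcal{A}_{u}^{q,q_\Gamma})=\mathrm{W}^{1,q,q_\Gamma}$, and $r$ as in the statement. Since $u^{0}\in X^r_{q,q_\Gamma}\hookrightarrow\mathrm{C}^{0,0}$ by \eqref{eq:beta}, its range lies in a box $[\tilde l,\tilde L]\subset[l,L]$, on which Assumption \ref{Akm} furnishes uniform ellipticity and boundedness constants; extending $k(\cdot,\cdot)$ and $m(\cdot,\cdot)$ constantly in $u$ outside $(\mathbb{R}_l^L)^3$ if necessary, the coefficients become bounded and elliptic for all real values with unchanged constants. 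On $Z:=C^0(J_T;\mathrm{C}^{0,0})$ I would then define $S\colon Z\to Z$ by $S(v)=u$, where $u$ is the unique solution in $\mathrm{MR}^r_{q,q_\Gamma}$ of
\[
\dot u + \mathcal{A}_{v(t)} u = \mathcal{F}(v),\qquad u(0)=u^{0}.
\]
Well-posedness of this linear problem is the first point to verify: because $v\in C^0(J_T;\mathrm{C}^{0,0})$ and $k,m$ are locally Lipschitz in $u$ (Assumption \ref{Akm}\eqref{akm3}), the family $t\mapsto\mathcal{A}_{v(t)}$ is continuous in $\mathcal{L}(\mathrm{W}^{1,q,q_\Gamma},\mathrm{W}^{-1,q,q_\Gamma})$, each member has autonomous maximal parabolic regularity, and $\mathcal{F}(v)\in L^r(J_T;\mathrm{W}^{-1,q,q_\Gamma})$ by Assumption \ref{Lipschitz}; non-autonomous maximal regularity then yields $u$.

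Next I would check the hypotheses of Schaefer's theorem. Compactness of $S$ is immediate, since $S$ takes values in $\mathrm{MR}^r_{q,q_\Gamma}$, which embeds compactly into $Z$ by \eqref{eq:compact}. Continuity of $S$ follows from continuous dependence of the non-autonomous solution on its data: if $v_n\to v$ in $Z$, then $\mathcal{A}_{v_n(\cdot)}\to\mathcal{A}_{v(\cdot)}$ uniformly in $t$ and $\mathcal{F}(v_n)\to\mathcal{F}(v)$ in $L^r(J_T;\mathrm{W}^{-1,q,q_\Gamma})$ by Assumption \ref{Lipschitz}, so applying the maximal regularity estimate to the difference, together with \eqref{eq:compact}, gives $S(v_n)\to S(v)$ in $Z$.

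The heart of the argument is the uniform a priori bound for the Schaefer family. If $u=\lambda S(u)$ for some $\lambda\in[0,1]$, then $u\in\mathrm{MR}^r_{q,q_\Gamma}\hookrightarrow C^0(J_T;\mathrm{C}^{0,0})$ solves
\[
\dot u + \mathcal{A}_{u} u = \lambda\,\mathcal{F}(u),\qquad u(0)=\lambda u^{0}.
\]
Here $\lambda u^{0}$ ranges in $[\min(0,\tilde l),\max(0,\tilde L)]$ and $\lambda\mathcal{F}$ again preserves the maximum principle for $\lambda\in[0,1]$, so the maximum principle (Lemma \ref{mp}, respectively Corollary \ref{rhsmp}) confines $u(t)$ to a fixed box $[l',L']\subset[l,L]$ independently of $\lambda$. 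On this box the extended coefficients coincide with the original ones, and Assumption \ref{Akm} supplies ellipticity and boundedness constants independent of $u$ and $\lambda$; hence the maximal regularity constant of $\mathcal{A}_{u(\cdot)}$ is uniform, and together with the uniform bound on $\lambda\mathcal{F}(u)$ this gives $\|u\|_{\mathrm{MR}^r_{q,q_\Gamma}}\le C$ and $\|u\|_Z\le C$ with $C$ independent of $\lambda$. Schaefer's theorem then produces a fixed point $u=S(u)$ solving \eqref{eq:qlACP} on $J_T$; since $u$ stays in $[l',L']$, the coefficient extension is irrelevant and $u$ solves the original problem.

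Higher regularity $u\in C^\delta(J_T;\mathrm{C}^{\gamma,\gamma_\Gamma})$ is then read off from \eqref{eq:gamma} in Lemma \ref{lem:Cbb}. For uniqueness I would take two solutions $u_1,u_2$, both confined to a common box by the maximum principle, and estimate $w=u_1-u_2$, which solves a linear equation whose right-hand side collects $(\mathcal{A}_{u_1}-\mathcal{A}_{u_2})u_2$ and $\mathcal{F}(u_1)-\mathcal{F}(u_2)$; both are controlled on the box by the local Lipschitz bounds of Assumption \ref{Akm}\eqref{akm3} and Assumption \ref{Lipschitz}, so a maximal regularity estimate on a short interval, combined with the embedding $\mathrm{MR}^r_{q,q_\Gamma}\hookrightarrow C^0(J_T;\mathrm{C}^{0,0})$ and absorption of the small-time factor, forces $w=0$ locally, and iterating covers $J_T$. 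I expect the main obstacle to be precisely the a priori bound: one must ensure the maximum principle applies uniformly across the whole Schaefer family, in particular under the scaling of data and forcing by $\lambda$, and one must resolve the apparent circularity whereby the pointwise box is needed for the uniform ellipticity constants while the continuity needed to even speak of the box comes from maximal regularity. The coefficient extension, together with the fact that $S$ already outputs $\mathrm{MR}$-functions (hence continuous ones to which Lemma \ref{mp} applies), breaks this circularity cleanly.
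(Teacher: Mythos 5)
Your proposal follows the paper's proof essentially step for step: provisional truncation of the coefficients, non-autonomous maximal parabolic regularity for the linearization $\dot u + \mathcal{A}_{v(t)}u = \mathcal{F}(v)$ (as in Lemmas \ref{Aiso}--\ref{AnAmr}), a Schaefer fixed point argument on $C^0(J_T;\mathrm{C}^{0,0})$ with the a priori bound for the family $u_\lambda = \lambda \mathcal{T}(u_\lambda)$ (solving \eqref{eq:qlACP} with data $\lambda u^0$, $\lambda\mathcal{F}$) supplied by the maximum principle (Lemma \ref{mp}, Corollary \ref{rhsmp}), a posteriori removal of the truncation along the orbit, and H\"older regularity from Lemma \ref{lem:Cbb}. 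The only cosmetic difference is uniqueness, which the paper obtains by citing \cite[Theorem~3.1]{PruessBari2002} via the Lipschitz estimate \eqref{eq:LS}, whereas you sketch the equivalent hand-made short-time absorption argument.
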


The proof of Theorem \ref{thm:local} is divided into four steps: 
\begin{enumerate}
\item \label{proof:bc} provisional reduction to bounded coefficients, 
\item \label{proof:nA} preliminary results on the linearized non-autonomous
problem, 
\item \label{proof:mp} a priori $L^{\infty}$-bounds, 
\item \label{proof:thm} Schaefer argument and proof of the theorem. 
\end{enumerate}

\subsection*{\eqref{proof:bc} Provisional reduction to bounded coefficients}

By Lemma \ref{lem:Cbb}, $u^{0}\in\mathrm{C}^{\beta,\beta_{\Gamma}}\subset\mathrm{C}^{0,0}$.
Let $-\infty<l_{0}\leq L_{0}<+\infty$ be such that $u^{0}\in\mathrm{C}_{l_{0}}^{L_{0}}$.
Define 
\[
[f]_{l}^{L}(x):=\begin{cases}
\begin{array}{cc}
L, & f(x)\geq L,\\
l, & f(x)\leq l,\\
f(x), & \text{otherwise.}
\end{array}\end{cases}
\]
and let $L:=L_{0}+1$, $l=l_{0}/2$. Instead of the coefficient functions
$k$ and $m$, consider 
\begin{equation}
k_{l}^{L}(\cdot,u(\cdot))=k(\cdot,[u]_{l}^{L}(\cdot))\quad\text{and}\quad m_{l}^{L}(\cdot,u(\cdot))=m(\cdot,[u]_{l}^{L}(\cdot))\label{eq:KL}
\end{equation}
in the following. In Step \eqref{proof:thm} below, it is shown that
$k_{l}^{L}=k$ and $m_{l}^{L}=m$ along the orbits of $u^{0}$. Since
the solution is unique and regular and the dependence of $k,m$ of
the solution is Lipschitz, this is sufficient to prove the theorem.
Clearly, if $k,m$ satisfy Assumption \ref{Akm}, then also $k_{l}^{L}$,
$m_{l}^{L}$ satisfy Assumption \ref{Akm} and the bounds in \ref{Akm}\eqref{akm1}
and \ref{Akm}\eqref{akm2} hold uniformly in $u\in\mathrm{C}^{0,0}$
for $k_{l}^{L}$, $m_{l}^{L}$.

\subsection*{\eqref{proof:nA} Preliminary results on the linearized non-autonomous
problem}

In this step of the proof and in Step \eqref{proof:mp}, using Step
\eqref{proof:bc}, assume additionally that all coefficient functions
are such that the bounds in \ref{Akm}\eqref{akm1} and \ref{Akm}\eqref{akm2}
hold uniformly in $u\in\mathrm{C}^{0,0}$.

\begin{lemma}\label{Aiso} There exist $q>d$ and $q_{\Gamma}>d-1$
such that for any $u\in C^{0}(J_{T};\mathrm{C}^{0,0})$, for all $t\in\overline{J_{T}}$,
for any $\lambda>0$, the operator $\mathcal{A}_{u(t)}^{q,q_{\Gamma}}+\lambda$
is an isomorphism 
\begin{equation}
\mathcal{A}_{u(t)}^{q,q_{\Gamma}}+\lambda\colon\mathrm{W}^{1,q,q_{\Gamma}}\to\mathrm{W}_0^{-1,q,q_{\Gamma}}.
\end{equation}
\end{lemma}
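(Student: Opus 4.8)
The plan is to write
\[
\mathcal{A}_{u(t)}^{q,q_\Gamma}+\lambda=\bigl(\mathcal{L}_{u(t)}^{q,q_\Gamma}+\lambda\bigr)+\mathcal{M}_{u(t)}^{q,q_\Gamma},
\]
to first establish that the divergence part $\mathcal{L}_{u(t)}^{q,q_\Gamma}+\lambda$ is already an isomorphism for suitable $q>d$ and $q_\Gamma>d-1$, and then to incorporate the bounded transmission term $\mathcal{M}_{u(t)}^{q,q_\Gamma}$ as a relatively compact perturbation, concluding by a Fredholm-index-zero argument combined with injectivity. Throughout I use that, by the standing restriction, $q\geq2$ and $q_\Gamma\geq2$, so $q>2$ and $q_\Gamma\geq2$.

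\emph{Step 1 (isomorphy of the divergence part).} Since $\mathfrak{l}_u$ decouples into the three domain-wise forms $\mathfrak{l}_{u,+},\mathfrak{l}_{u,-},\mathfrak{l}_{u,\Gamma}$, the operator $\mathcal{L}_{u(t)}^{q,q_\Gamma}+\lambda$ is block-diagonal, and it suffices to find exponents so that each of the three Neumann operators $\mathcal{L}_{u(t),+}^{q,q_\Gamma}+\lambda$, $\mathcal{L}_{u(t),-}^{q,q_\Gamma}+\lambda$, $\mathcal{L}_{u(t),\Gamma}^{q,q_\Gamma}+\lambda$ is an isomorphism onto the corresponding dual Sobolev space. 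For the bulk blocks I would invoke the optimal Sobolev regularity theory of \cite{DisKaiReh15OSR}: if $d=2$, the Lipschitz regularity of $\op,\om$ already yields a Gröger--Meyers range $q\in(2,2+\varepsilon)$, and since $2+\varepsilon>2=d$ this gives $q>d$; if $d=3$, merely Lipschitz bulk domains do not allow one to pass the Meyers exponent for bounded measurable coefficients, which is precisely where the multiplicative structure $k_\pm=\kappa_\pm\varkappa_\pm$ of Assumption~\ref{Akm}\eqref{akm4} enters, the continuity of the scalar factor $\kappa_\pm$ together with the fixed, uniformly continuous matrix factor $\varkappa_\pm$ allowing one to reach some $q>3=d$. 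On the interface, $\Gamma$ is a $C^1$-manifold with Lipschitz boundary and $k_\Gamma(\cdot,u)$ is continuous, so the same theory gives an isomorphism for some $q_\Gamma>d-1$. Crucially, the ellipticity and boundedness constants $\underline{k},\overline{k}$ are uniform in $u$ by Assumption~\ref{Akm}\eqref{akm1}, and the matrix part $\varkappa_\pm$ is independent of $u$ while $\kappa_\pm$ enters only through its uniform bounds; hence the pair $(q,q_\Gamma)$ can be chosen uniformly in $t\in\overline{J_T}$ and in the given $u\in C^0(J_T;\mathrm{C}^{0,0})$.

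\emph{Step 2 (compact perturbation).} Fixing such $q,q_\Gamma$, the transmission operator $\mathcal{M}_{u(t)}^{q,q_\Gamma}$ of \eqref{eq:defM} factors through the trace $\mathrm{tr}_\Gamma\colon\mathrm{W}^{1,q,q_\Gamma}\to L^q(\Gamma)^3$, followed by multiplication with the bounded matrix $\mathbf{m}$ and the embedding of $\Gamma$-data into $\mathrm{W}^{-1,q,q_\Gamma}$. Since $q>2$, the trace into $L^q(\Gamma)$ is compact, so $\mathcal{M}_{u(t)}^{q,q_\Gamma}$ is a compact operator and, composing with the bounded inverse from Step~1, $(\mathcal{L}_{u(t)}^{q,q_\Gamma}+\lambda)^{-1}\mathcal{M}_{u(t)}^{q,q_\Gamma}$ is compact on $\mathrm{W}^{1,q,q_\Gamma}$. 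Consequently
\[
\mathcal{A}_{u(t)}^{q,q_\Gamma}+\lambda=\bigl(\mathcal{L}_{u(t)}^{q,q_\Gamma}+\lambda\bigr)\bigl(I+(\mathcal{L}_{u(t)}^{q,q_\Gamma}+\lambda)^{-1}\mathcal{M}_{u(t)}^{q,q_\Gamma}\bigr)
\]
is a Fredholm operator of index zero from $\mathrm{W}^{1,q,q_\Gamma}$ onto $\mathrm{W}^{-1,q,q_\Gamma}$. It then remains to verify injectivity. If $\psi\in\mathrm{W}^{1,q,q_\Gamma}$ solves $(\mathcal{A}_{u(t)}^{q,q_\Gamma}+\lambda)\psi=0$, then, as the domains are bounded and $q>2$, $q_\Gamma\geq2$, we have $\psi\in\mathrm{W}^{1,2,2}\subset\mathrm{W}^{1,q',q'_\Gamma}$, so $\psi$ is an admissible test function and testing against $\psi$ gives
\[
0=\mathfrak{a}_{u(t)}(\psi,\psi)+\lambda\Vert\psi\Vert_{\mathrm{L}^{2,2}}^{2}\geq\lambda\Vert\psi\Vert_{\mathrm{L}^{2,2}}^{2}
\]
by the nonnegativity in \eqref{eq:kera}. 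As $\lambda>0$ this forces $\psi=0$, and a Fredholm operator of index zero with trivial kernel is an isomorphism.

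I expect Step~1 to be the main obstacle: obtaining $\mathrm{W}^{-1,q,q_\Gamma}$-isomorphy for an exponent strictly above the spatial dimension on domains that are only Lipschitz. In three dimensions this is genuinely beyond Gröger--Meyers theory for bounded measurable coefficients, and recovering it is exactly the role of the coefficient splitting in Assumption~\ref{Akm}\eqref{akm4}. By contrast, the perturbation and injectivity arguments are routine once \eqref{eq:kera} and the block structure of $\mathfrak{l}_u$ are in place.
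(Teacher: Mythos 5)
Your proposal is correct, and its decisive step coincides with the paper's own proof. Step 1 is exactly the paper's argument: for $d=2$, Lax--Milgram at $q=q_\Gamma=2$ plus Sneiberg extrapolation along the complex interpolation scale yields exponents $q>2=d$ and $q_\Gamma>1=d-1$; for $d=3$, it is precisely the splitting $k_\pm=\kappa_\pm\varkappa_\pm$ of Assumption \ref{Akm}\eqref{akm4} that is used, the uniformly continuous matrix part giving some $q>3$ by \cite[Lemma~6.5]{HDR09}, the continuous, positively lower-bounded scalar factor being absorbed by the multiplicative-perturbation result \cite[Theorem~6.3]{DisKaiReh15OSR}, and the same extrapolation argument on the $C^1$-manifold $\Gamma$ giving $q_\Gamma>2$; your observation that the exponents can be chosen uniformly in $t$ and $u$ because only the uniform constants $\underline{k},\overline{k}$ enter is likewise the implicit content of the paper's statement. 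Where you genuinely diverge is the transmission term: the paper disposes of $\mathcal{M}_{u(t)}^{q,q_\Gamma}$ in one line by relative boundedness, citing \cite[Lemma~3.4]{DisDIMX15}, to conclude that $\mathcal{L}_{u(t)}+\lambda$ and $\mathcal{A}_{u(t)}+\lambda$ have the same domain, whereas you treat $\mathcal{M}_{u(t)}^{q,q_\Gamma}$ as a compact perturbation, factoring it through the compact trace into interface Lebesgue spaces, and conclude via Fredholm index zero together with injectivity from the nonnegativity \eqref{eq:kera} and $\lambda>0$. Both are sound. Your variant is self-contained and transparently delivers the isomorphism for \emph{every} $\lambda>0$ at once, a point that the relative-boundedness route (which by itself yields invertibility only after a small-bound or large-$\lambda$ Neumann-series step, the extension to all $\lambda>0$ then resting on the coercivity you make explicit) leaves implicit; the paper's variant is shorter and reuses a lemma that also feeds its maximal-regularity machinery. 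Two cosmetic remarks: compactness of the trace $W^{1,q}(\Omega_\pm)\to L^q(\Gamma)$ holds for all $q>1$, so your restriction $q>2$ plays no role there, and in your factorization the interface component of $\mathrm{tr}_\Gamma$ should land in $L^{q_\Gamma}(\Gamma)$ rather than $L^{q}(\Gamma)$ --- both immaterial to the argument.
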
 \begin{proof} First note that $\mathcal{A}_{u(t)}^{q,q_{\Gamma}}\colon\mathrm{dom}(\mathcal{A}_{u(t)}^{q,q_{\Gamma}})\to\mathrm{W}_0^{-1,q,q_{\Gamma}}$
is well-defined for all $t\in\overline{J_{T}}$. In the case $d=2$,
by the Lax-Milgram theorem, the claim holds for $q=q_{\Gamma}=2$.
By Sneiberg's theorem \citep{snei}, the isomorphism property extrapolates
to a neighbourhood of $\mathrm{W}^{1,2,2}$ in the complex interpolation
scale $[\mathrm{W}^{1,p,p_{\Gamma}},\mathrm{W}^{1,p',p'_{\Gamma}}]_{1/p}=\mathrm{W}^{1,2,2_{\Gamma}},\,1<p,p_{\Gamma}<\infty$,
see \citep{Groeger89}. \\
If $d=3$, then Assumption \ref{Akm}\eqref{akm4} holds. If $\kappa_{\pm}\equiv1$,
then $k_{\pm}=\varkappa_{\pm}$ is independent of $u$ and then by \citep[Theorem~1.1]{ers},
\citep[Lemma~6.5]{HDR09}, there is a $q>3$ such that the isomorphism
property $\mathcal{L}_{u(t),\pm}+\lambda\colon W^{1,q}(\Omega_{\pm})\to W_0^{-1,q}(\Omega_{\pm})$
holds true. Using the same extrapolation argument as in the case $d=2$,
there exists a $q_{\Gamma}>2$ such that $\mathcal{L}_{u(t),\Gamma}+\lambda\colon W^{1,q_{\Gamma}}(\Gamma)\to W_0^{-1,q_{\Gamma}}(\Gamma)$
is an isomorphism. In \citep[Theorem~6.3]{DisKaiReh15OSR} it was
shown that the domains of $\mathcal{L}_{u(t),\pm},\mathcal{L}_{u(t),\Gamma}$
remain unchanged by a scalar multiplicative perturbation $\kappa_{\pm}\in C^{0}(\Omega_{\pm})$
that is positively bounded from below. This proves the result for
the operators $\mathcal{L}_{u(t)}$, $t\in\overline{J_{T}}$. By relative
boundedness of $\mathcal{M}_{u(t)}$, \citep[Lemma~3.4]{DisDIMX15},
the domains of $\mathcal{L}_{u(t)}+\lambda$ and $\mathcal{A}_{u(t)}+\lambda$
coincide. This proves the claim. \end{proof} 

\begin{lemma}\label{Amr}Let $2\leq q,q_{\Gamma}<\infty$, $1<r<\infty$
and let $u\in C^{0}(J_{T};\mathrm{C}^{0,0})$. Then for all $t\in\overline{J_{T}}$,
$\mathcal{A}_{u(t)}^{q,q_{\Gamma}}$ has maximal $L^{r}(J_{T};\mathrm{W}_0^{-1,q,q_{\Gamma}})$-regularity.
\end{lemma}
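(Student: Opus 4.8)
The plan is to establish maximal regularity for the decoupled principal part of $\mathcal{A}_{u(t)}^{q,q_\Gamma}$ first and then to recover the full operator by a lower-order perturbation; recall that by \cite{Dore2000} the property is independent of $1<r<\infty$, so it suffices to verify it for one convenient exponent. Writing $\mathcal{A}_{u(t)}^{q,q_\Gamma}=\mathcal{L}_{u(t)}^{q,q_\Gamma}+\mathcal{M}_{u(t)}^{q,q_\Gamma}$ as in Subsection \ref{operator}, I would first observe that $\mathcal{L}_{u(t)}^{q,q_\Gamma}$ acts diagonally on the product $\mathrm{W}^{-1,q,q_\Gamma}=W^{-1,q}(\op)\times W^{-1,q}(\om)\times W^{-1,q_\Gamma}(\Gamma)$ through the three Neumann operators $\mathcal{L}_{u(t),+}^{q,q_\Gamma}$, $\mathcal{L}_{u(t),-}^{q,q_\Gamma}$, $\mathcal{L}_{u(t),\Gamma}^{q,q_\Gamma}$. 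Since maximal $L^{r}$-regularity is inherited by finite direct sums, it is enough to prove it for each factor separately.

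For the two bulk operators the argument rests on heat-kernel estimates. At $q=2$ the form $\mathfrak{l}_{u(t),\pm}$ is bounded and, after a shift, coercive, so the corresponding operator is sectorial on the Hilbert space $W^{-1,2}(\Omega_\pm)$ and de Simon's theorem yields maximal regularity. For general $q$ I would use that, for real, bounded, measurable and elliptic coefficients as in Assumption \ref{Akm}\eqref{akm1}, the Neumann semigroup on the Lipschitz domain $\Omega_\pm$ satisfies Gaussian upper bounds; by the Hieber--Pr\"uss--Coulhon--Duong principle these transfer maximal regularity to the whole scale $q\in[2,\infty)$, and the realization in the negative-order space $W^{-1,q}(\Omega_\pm)$ is exactly the one carried out in \cite{HDR09}. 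For $d=3$, where the isomorphy is more delicate, I would reduce to a $u$-independent principal part $\varkappa_\pm$ via the factorization $k_\pm=\kappa_\pm\varkappa_\pm$ of Assumption \ref{Akm}\eqref{akm4} together with \cite[Theorem~6.3]{DisKaiReh15OSR}, exactly as in the proof of Lemma \ref{Aiso}; the continuous scalar factor $\kappa_\pm$ does not affect the maximal-regularity property. The interface operator $\mathcal{L}_{u(t),\Gamma}^{q,q_\Gamma}$ is treated in the same way: localizing to charts of the compact $C^1$-manifold $\Gamma$ reduces it to the same Euclidean situation, and Gaussian bounds for the conormal Neumann semigroup on $\Gamma$ give maximal regularity in $W^{-1,q_\Gamma}(\Gamma)$ for all $q_\Gamma\in[2,\infty)$.

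It then remains to add back the transmission coupling $\mathcal{M}_{u(t)}^{q,q_\Gamma}$. By \cite[Lemma~3.4]{DisDIMX15}, already used in Lemma \ref{Aiso}, this operator is $\mathcal{L}_{u(t)}^{q,q_\Gamma}$-bounded with relative bound zero, the reason being that it only acts through the traces on $\Gamma$, which are of strictly lower order than the domain norm by a trace interpolation inequality. Since maximal regularity is stable under relatively bounded perturbations of sufficiently small relative bound, $\mathcal{A}_{u(t)}^{q,q_\Gamma}$ inherits maximal $L^{r}(J_T;\mathrm{W}^{-1,q,q_\Gamma})$-regularity; the constants are uniform in $t\in\overline{J_T}$ because, after the reduction in Step \eqref{proof:bc}, the ellipticity and boundedness constants in Assumption \ref{Akm} hold uniformly in $u\in\mathrm{C}^{0,0}$.

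I expect the only real difficulty to be the bulk step on the non-smooth geometry. Because $\op,\om$ are merely Lipschitz and $\partial\Gamma$ meets $\partial\Omega$ non-smoothly, optimal elliptic $W^{2,q}$-regularity is unavailable, so one cannot identify a classical operator domain and must work entirely in the negative-order scale, relying on the Gaussian-estimate machinery behind \cite{HDR09} instead of resolvent bounds derived from elliptic regularity. Checking that this machinery indeed covers the full range $q,q_\Gamma\in[2,\infty)$ uniformly in $t$, and that the $C^1$-manifold-with-Lipschitz-boundary geometry of $\Gamma$ fits the same framework, is the delicate point; by comparison the direct-sum and perturbation steps are routine.
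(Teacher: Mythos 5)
Your proposal is correct in substance but follows a genuinely different route from the paper, whose proof is essentially a citation argument: the paper invokes \cite{DisDIMX15}, where maximal $L^r(J_T;\mathrm{W}^{-1,q,q_\Gamma})$-regularity of the coupled operator was already established for flat $\Gamma$, and then only supplies the missing ingredient, namely maximal regularity of the Neumann operator $\mathcal{L}_{u(t),\Gamma}$ on the $C^1$-manifold $\Gamma$, deduced from the flat case \cite{Groeger89} by chart-wise flattening with a partition of unity, the localization perturbations being harmless for maximal regularity (see \cite{DHP03}, \cite{DenkPrussZacher08}). You instead unpack what lies behind that citation: decouple $\mathcal{A}^{q,q_\Gamma}_{u(t)}=\mathcal{L}^{q,q_\Gamma}_{u(t)}+\mathcal{M}^{q,q_\Gamma}_{u(t)}$, treat the three diagonal Neumann blocks separately (de Simon on the Hilbert space $W^{-1,2}$, then Gaussian estimates for general $q$), and reinstate the transmission coupling as a relatively bounded perturbation of relative bound zero via \cite[Lemma~3.4]{DisDIMX15}; this is legitimate and is essentially how the cited reference proceeds internally, so your argument buys self-containedness at the price of re-deriving known results. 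Two caveats deserve emphasis. First, the Hieber--Pr\"uss/Coulhon--Duong transference yields maximal regularity on the $L^q$-scale only; its extension to the negative-order space $W^{-1,q}(\Omega_\pm)$ is not a formal consequence of Gaussian bounds but is the actual core content of \cite{HDR09}, so your sentence deferring ``the realization in $W^{-1,q}$'' to that reference is doing more work than the phrase ``principle'' suggests, and your concluding paragraph rightly identifies this as the delicate point. Second, your detour through Assumption \ref{Akm}\eqref{akm4} and \cite[Theorem~6.3]{DisKaiReh15OSR} for $d=3$ is superfluous here: that structural hypothesis and the multiplicative-perturbation theorem are needed only for the elliptic isomorphism (i.e.\ the identification $\mathrm{dom}(\mathcal{A}^{q,q_\Gamma}_{u})=\mathrm{W}^{1,q,q_\Gamma}$) in Lemma \ref{Aiso}; maximal regularity in $\mathrm{W}^{-1,q,q_\Gamma}$ holds for all $2\leq q,q_\Gamma<\infty$ and arbitrary real, bounded, measurable, elliptic coefficients, which is exactly why Lemma \ref{Amr} is stated without any restriction on the exponents or on the coefficient structure, while Lemma \ref{Aiso} is not --- importing the $d=3$ factorization into this proof wrongly suggests the two properties stand or fall together.
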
 \begin{proof} The result was shown in \citep{DisDIMX15}
if $\Gamma$ is flat. It remains to check the maximal regularity of
the Neumann operator $\mathcal{L}_{u(t),\Gamma}$ on $C^{1}$-boundaries.
This follows from maximal regularity for flat domains \citep{Groeger89},
using the usual localization methods, i.e. exploiting that the property
of maximal regularity is preserved under perturbations that occur
when locally flattening the domain and straightening the boundary
with respect to a sufficiently fine covering and a corresponding partition
of unity, see \citep{DHP03} for the general strategy and \citep{DenkPrussZacher08}
for this argument in a similar context. \end{proof} \begin{lemma}
\label{AnAmr} Let $w\in C^{0}(J_{T};\mathrm{C}^{0,0})$, $q,q_{\Gamma}$
be as in Lemma \ref{Aiso}. Then for every $r$ and $u^{0}\in\mathrm{X}_{q,q_{\Gamma}}^{r}$
as in Theorem \ref{thm:local}, for all $f\in L^{r}(J_{T};\mathrm{W}_0^{-1,q,q_{\Gamma}})$,
there exists a unique global solution $v\in\mathrm{MR}_{q,q_{\Gamma}}^{r}$
of 
\begin{align}
\dot{v}(t)+\mathcal{A}_{w(t)}^{q,q_{\Gamma}}v(t) & =f(t),\quad\text{in }\mathrm{W}_0^{-1,q,q_{\Gamma}},\label{nonA}\\
v(0) & =u^{0}.\nonumber 
\end{align}
The solution operator 
\begin{equation}
(\partial_{t}+\mathcal{A}_{w(\cdot)}^{q,q_{\Gamma}})^{-1}\colon f\in L^{r}(J_{T};\mathrm{W}_0^{-1,q,q_{\Gamma}})\mapsto v\in\mathrm{MR}_{q,q_{\Gamma}}^{r}
\end{equation}
is bounded. \end{lemma} \begin{proof} For two Banach spaces $X,Y$,
let $\mathcal{B}(X,Y)$ denote the space of bounded linear operators
$B\colon X\to Y$. By continuity of $w$, $k_{l}^{L}$ and $m_{l}^{L}$
and by Lemma \ref{Aiso}, the map $J_{T}\ni t\mapsto\mathcal{A}_{w}^{q,q_{\Gamma}}(t)\in\mathcal{B}(\mathrm{W}^{1,q,q_{\Gamma}},\mathrm{W}_0^{-1,q,q_{\Gamma}})$
is uniformly continuous. By Lemma \ref{AnAmr}, for all $t\in\overline{J_{T}}$,
$\mathcal{A}_{w}^{q,q_{\Gamma}}(t)$ has maximal $L^{r}(J_{T};\mathrm{W}_0^{-1,q,q_{\Gamma}})$-regularity,
so existence and boundedness of the solution operator follow from
\citep[Theorem~2.5]{PruSch01nonAut}. \\
 \end{proof} 

\subsection*{\eqref{proof:mp} $L^{\infty}$-bounds on $u$}

If $\mathcal{F}=0,$ uniform $\mathrm{L}^{\infty,\infty}$-bounds
on $u$ can be proved by a bulk-interface comparison principle. With
respect to results for the bulk problem, the point is to show that
the nonlinear bulk-interface interaction terms derived from a generalized
gradient structure preserve this property. \begin{lemma} \label{mp}
Let $r,q,q_{\Gamma}$ as in Theorem \ref{thm:local}, $\mathcal{F}=0$
and $u^{0}\in\mathrm{X}_{q,q_{\Gamma}}^{r}$ with $u^{0}\in\mathrm{C}_{l}^{L}$
for some $-\infty<l\leq L<+\infty$. Assume that $u\in\mathrm{MR}_{q,q_{\Gamma}}^{r}$
is a solution of \eqref{eq:qlACP}. 
 Then for all $t\in\overline{J_{T}}$, $u(t)\in\mathrm{C}_{l}^{L}.$
\end{lemma}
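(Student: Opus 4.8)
The plan is to prove the two one-sided bounds $u(t)\le L$ and $u(t)\ge l$ separately, each by a truncation (Stampacchia-type) energy estimate obtained from testing the weak form of \eqref{eq:qlACP} (recall $\mathcal{F}\equiv0$) against a truncation of the solution, and then to upgrade the resulting a.e.\ bound to an everywhere bound using $u(t)\in\mathrm{C}^{0,0}$ from Lemma~\ref{lem:Cbb}, so that $u(t)\in\mathrm{C}_l^L$. For the upper bound I would use the positive part $w:=(u-L)_+$, understood componentwise as the triple $((u_+-L)_+,(u_--L)_+,(u_\Gamma-L)_+)$. Since $u(t)\in\mathrm{W}^{1,q,q_\Gamma}$ and truncation preserves Sobolev regularity, $w(t)\in\mathrm{W}^{1,q,q_\Gamma}\hookrightarrow\mathrm{W}^{1,q',q'_\Gamma}$, so $w$ is an admissible test function. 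Testing against $w$ and using $\mathcal{A}_u=\mathcal{L}_u+\mathcal{M}_u$ gives, for a.e.\ $t$, the identity $\langle\dot u,w\rangle+\mathfrak{l}_u(u,w)+\mathfrak{m}_u(u,w)=0$; the goal is to show that the last two terms are nonnegative and that the first equals $\tfrac{d}{dt}\tfrac12\|(u-L)_+\|_{\mathrm{L}^{2,2}}^2$.

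Two of the three contributions are routine. For the diffusion term, $\nabla w_\pm=\mathbf{1}_{\{u_\pm>L\}}\nabla u_\pm$ and $\nabla_\Gamma w_\Gamma=\mathbf{1}_{\{u_\Gamma>L\}}\nabla_\Gamma u_\Gamma$, so ellipticity \eqref{ell} yields $\mathfrak{l}_u(u,w)=\int_{\{u_+>L\}}\nabla u_+\cdot k_+\nabla u_++\dots\ge\underline{k}\int|\nabla w|^2\ge0$. For the time-derivative term, the product structure of $\mathrm{W}^{-1,q,q_\Gamma}$ splits the pairing into its three components over $\op,\om,\Gamma$, each of which I expect to identify as $\tfrac{d}{dt}\tfrac12\|(\cdot-L)_+\|_{L^2}^2$ via a chain-rule (integration-by-parts in time) argument: I would mollify $u$ in time, apply the elementary pointwise identity to the smooth-in-time approximants, and pass to the limit using the continuity of the truncation map on $\mathrm{W}^{1,q,q_\Gamma}$ and the embedding $u(t)\in\mathrm{C}^{0,0}$.

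The essential step, where the bulk-interface structure is used exactly, is the sign of $\mathfrak{m}_u(u,w)=\int_\Gamma\tr u\cdot\mathbf{m}\,\tr w\,\mathrm d\mathcal H_{d-1}$. Writing $e_+,e_-,e_\Gamma$ for the standard basis of $\mathbb{R}^3$, the transmission matrix decomposes into rank-one positive semi-definite pieces, $\mathbf{m}=\mp\,v_1v_1^{\!\top}+\mm\,v_2v_2^{\!\top}+\mg\,v_3v_3^{\!\top}$ with $v_1=e_+-e_\Gamma$, $v_2=e_--e_\Gamma$, $v_3=e_+-e_-$; this is exactly the algebraic content of \eqref{eq:kerMu}, i.e.\ $r\cdot\mathbf{m}r=\mp(r_+-r_\Gamma)^2+\mm(r_--r_\Gamma)^2+\mg(r_+-r_-)^2$. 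Consequently, pointwise on $\Gamma$ with $a:=\tr u$ and $b:=\tr w=(a-L)_+$, one has $a\cdot\mathbf{m}\,b=\mp(a_+-a_\Gamma)\big((a_+-L)_+-(a_\Gamma-L)_+\big)+\mm(\dots)+\mg(\dots)$, and each summand is nonnegative: the coefficients $\mp,\mm,\mg\ge0$ by Assumption~\ref{Akm}\eqref{akm2}, and, since $s\mapsto(s-L)_+$ is nondecreasing, $(s-t)\big((s-L)_+-(t-L)_+\big)\ge0$ for all $s,t\in\mathbb{R}$. Integrating over $\Gamma$ gives $\mathfrak{m}_u(u,w)\ge0$.

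Combining the three estimates yields $\tfrac{d}{dt}\|(u-L)_+\|_{\mathrm{L}^{2,2}}^2\le0$; since $u^0\in\mathrm{C}_l^L$ forces $(u^0-L)_+=0$, absolute continuity in time gives $\|(u-L)_+(t)\|_{\mathrm{L}^{2,2}}=0$, hence $u(t)\le L$. The lower bound is entirely analogous, testing with $w:=-(l-u)_+$ and using that $s\mapsto-(l-s)_+$ is again nondecreasing, which gives $\tfrac{d}{dt}\|(l-u)_+\|_{\mathrm{L}^{2,2}}^2\le0$ and hence $u(t)\ge l$. I expect the conceptual crux to be precisely the nonnegativity of $\mathfrak{m}_u(u,w)$ through the rank-one/monotonicity argument, which is where the gradient structure of the coupling is indispensable; the genuine technical hurdle, by contrast, is the rigorous justification of the chain rule for the truncation in the weak $\mathrm{W}^{-1,q,q_\Gamma}$-setting, where $L^2$ is not the pivot space, which I would handle by the time-mollification argument indicated above.
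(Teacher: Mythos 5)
Your proposal is correct and takes essentially the same route as the paper: there one tests with the same truncations $\zeta^{L}=[(L-u)^{-}]=(u-L)_+$ and $\zeta_{l}=[(u-l)^{-}]$, obtains $\mathfrak{l}_{u}(u,\zeta^{L})=\mathfrak{l}_{u}(\zeta^{L},\zeta^{L})\geq 0$ by ellipticity, and verifies $\mathfrak{m}_{u}(u,\zeta^{L})\geq 0$ termwise, your monotonicity inequality $(s-t)\bigl((s-L)_+-(t-L)_+\bigr)\geq 0$ appearing there as an explicit case distinction over the subsets of $\Gamma$ on which the traces lie above or below $L$. The only difference is cosmetic: you flag and sketch (via time-mollification) the chain-rule identity $\int_{0}^{s}\dot{u}(t)(\zeta^{L}(t))\,\mathrm{d}t=\tfrac{1}{2}\Vert\zeta^{L}(s)\Vert_{\mathrm{L}^{2,2}}^{2}$, which the paper uses without further justification.
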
 
\begin{proof} Define $\zeta_{l}(t)=[(u(t)-l)^{-}]$ and $\zeta^{L}(t)=[(L-u(t))^{-}]$,
where 
\[
[f^{-}](x):=\begin{cases}
\begin{array}{cc}
0, & f(x)\geq0,\\
-f(x), & f(x)<0.
\end{array}\end{cases}
\]
Since $[\cdot^{-}]$ is Lipschitz and $r,q,q_{\Gamma}\geq2$, we have
$\zeta_{l},\zeta^{L}\in L^{r}(J_{T};\mathrm{W}^{1,q,q_{\Gamma}})\hookrightarrow L^{r'}(J_{T};\mathrm{W}^{1,q',q'_{\Gamma}})$
with
\[
\nabla\zeta^{L}(t,x)=\begin{cases}
\begin{array}{cc}
0, & u(t,x)\leq L,\\
\nabla u(t,x), & u(t,x)>L,
\end{array}\end{cases}
\]
and $\zeta_{l}(0)=\zeta^{L}(0)\equiv0$. Thus, for all $s\in\overline{J_{T}}$,
testing \eqref{eq:qlACP} with $\zeta^{L}$ in space and time gives
$\zeta^{L}\equiv0$ as 
\[
\int_{0}^{s}\dot{u}(t)(\zeta^{L}(t))\,\mathrm{d}t=\frac{1}{2}\Vert\zeta^{L}(s)\Vert_{\mathrm{L}^{2,2}}^{2}\geq0
\]
and 
\begin{equation}
\int_{0}^{s}\mathcal{A}_{u(t)}^{q,q_{\Gamma}}u(t)(\zeta^{L}(t))\,\mathrm{d}t=\int_{0}^{s}\mathfrak{l}_{u(t)}(u(t),\zeta^{L}(t))+\mathfrak{m}_{u(t)}(u(t),\zeta^{L}(t))\,\mathrm{d}t\geq0.\label{eq:maxprinc}
\end{equation}
To show the estimate from below in (\ref{eq:maxprinc}), note that
\[
\int_{0}^{s}\mathfrak{l}_{u(t)}(u(t),\zeta^{L}(t))\,\mathrm{d}t=\int_{0}^{s}\mathfrak{l}_{u(t)}(\zeta^{L}(t),\zeta^{L}(t))\,\mathrm{d}t\geq0
\]
as $k$ is bounded below by $\underline{k}$ and that 
\begin{align}
\int_{0}^{s}\mathfrak{m}_{u(t)}(u(t),\zeta^{L}(t))\,\mathrm{d}t & =\int_{0}^{s}\int_{\Gamma}m_{+}(u)(\tp-\tg)(\zeta_{+}^{L}-\zeta_{\Gamma}^{L})(t)\label{marray}\\
 & +m_{-}(u)(\tm-\tg)(\zeta_{-}^{L}-\zeta_{\Gamma}^{L})(t)\nonumber \\
 & +m_{\Gamma}(u)(\tp-\tm)(\zeta_{+}^{L}-\zeta_{-}^{L})(t)\,\mathrm{d}\mathcal{H}_{d-1}\,\mathrm{d}t,\nonumber 
\end{align}
where $m$ is bounded below by $\underline{m}$ and where 
\begin{align*}
\int_{\Gamma}(\tp-\tg)(\zeta_{+}^{L}-\zeta_{\Gamma}^{L})(t)\,\mathrm{d}\mathcal{H}_{d-1} & =\int_{\{x\in\Gamma\colon\tp(x)>L>\tg(x)\}}(\tp-\tg)(\tp-L)(t)\,\mathrm{d}\mathcal{H}_{d-1}\\
 & +\int_{\{x\in\Gamma\colon\tp(x)<L<\tg(x)\}}(\tp-\tg)(L-\tg)(t)\,\mathrm{d}\mathcal{H}_{d-1}\\
 & +\int_{\{x\in\Gamma\colon\tp(x),\tg(x)>L\}}(\tp-\tg)(\tp-\tg)(t)\,\mathrm{d}\mathcal{H}_{d-1}\geq0,\\
\end{align*}
and non-negativity of the remaining terms on the right-hand-side of
\eqref{marray} follows analogously. \\
 The proof of the lower bound, i.e. $\zeta^{l}\equiv0$ follows analogously
by testing \eqref{eq:qlACP} with $\zeta^{l}$.

\end{proof}

If $\mathcal{F}\neq0$, the proof of Theorem \ref{thm:local} still
requires that $\mathcal{F}$ preserves $L^{\infty}$-bounds in a suitable
sense. More precisely, it is straightforward to see that the proof of Lemma \ref{mp} still works if $\mathcal{F}$ is such that for given $u_0 \in \mathrm{C}_l^L$, there are constants $l_F < L_F \in \mathbb{R}$ such that for all $u \in \mathrm{MR}^r_{q,q_\Gamma}$ such that $u(0)=u_0$, for all $s \in J_T$, 
$$ \int_{0}^{s}\mathcal{F}(u(t))(\zeta^{L_{f}}(t))\,\mathrm{d}t\leq0 \quad \text{and} \quad
\int_{0}^{s}\mathcal{F}(u(t))(\zeta_{l_{f}}(t))\,\mathrm{d}t\geq0.$$
It is shown in \citep{Disser19} how chemical reaction rates
of type \eqref{eq:reactionRates} fit into the framework of Theorem
\ref{thm:local}. Another general example is given by terms of Allen-Cahn-type,
treated in the following corollary. A concrete example is 
\begin{equation}
f_{+}(u_{+})=-u_{+}^{3},g_{+}(u_{+})=(1-u_{+})^{3},\quad f_{\Gamma}(u_{\Gamma})=-u_{\Gamma}^{5}.\label{eq:AllenCahnEx}
\end{equation}
\begin{coro}\label{rhsmp} Let $\mathcal{F}$ satisfy Assumption
\ref{Lipschitz} and let all the components $\varphi$ of $\mathcal{F}$,
e.g. $\varphi=f_{+},g_{-},\dots$ in \eqref{eq:plus}–\eqref{eq:Gamma}
be independent of $x\in\op,\om$, $y\in\Gamma$, respectively. Assume
that $\varphi$ are continuously differentible in $u$ and that $g_{\pm}$
depend only on $u_{\pm}$, respectively, whereas $f_{\Gamma}$ depends
only on $u_{\Gamma}$. Assume that all $\varphi$ satisfy the dissipativity
condition 
\begin{equation}
\liminf_{\vert v\vert\to\infty}-\varphi'(v)>0.\label{dissCond}
\end{equation}
Then, under the assumptions of Lemma \ref{mp}, given a solution $u\in\mathrm{MR}_{q,q_{\Gamma}}^{r}$
of \eqref{eq:qlACP}, there are constants $-\infty<l_{f}\leq L_{f}<+\infty$,
such that for all $t\in\overline{J_{T}}$, $u(t)\in\mathrm{C}_{l_{f}}^{L_{f}}$.
\end{coro}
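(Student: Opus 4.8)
The plan is to repeat the testing argument of Lemma \ref{mp}, now carrying along the extra contribution $\mathcal{F}(u)$ on the right-hand side of \eqref{eq:qlACP}, and to fix the truncation levels so that this contribution has the correct sign. First I would use the dissipativity condition \eqref{dissCond} to choose the levels. Each component $\varphi\in\{f_+,f_-,f_\Gamma,g_+,g_-,h_+,h_-,h_\Gamma\}$ is $C^1$ with $\liminf_{|v|\to\infty}(-\varphi'(v))>0$, so there are $\varepsilon_\varphi>0$, $R_\varphi>0$ with $\varphi'(v)\leq-\varepsilon_\varphi$ for $|v|\geq R_\varphi$; integrating gives $\varphi(v)\to-\infty$ as $v\to+\infty$ and $\varphi(v)\to+\infty$ as $v\to-\infty$. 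Hence there are finite levels $L_\varphi\geq R_\varphi$ and $l_\varphi\leq-R_\varphi$ with $\varphi(v)\leq0$ for $v\geq L_\varphi$ and $\varphi(v)\geq0$ for $v\leq l_\varphi$. Choosing $-\infty<l_0\leq L_0<+\infty$ with $u^0\in\mathrm{C}_{l_0}^{L_0}$ (possible since $u^0\in X^r_{q,q_\Gamma}\hookrightarrow\mathrm{C}^{0,0}$ by Lemma \ref{lem:Cbb}), I set $L_f:=\max\{L_0,\max_\varphi L_\varphi\}$ and $l_f:=\min\{l_0,\min_\varphi l_\varphi\}$. These are finite and independent of $T$.

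For the upper bound I would test \eqref{eq:qlACP} with $\zeta^{L_f}(t)=[(L_f-u(t))^-]$, exactly as in Lemma \ref{mp}. This function lies in $L^r(J_T;\mathrm{W}^{1,q,q_\Gamma})\hookrightarrow L^{r'}(J_T;\mathrm{W}^{1,q',q'_\Gamma})$, vanishes at $t=0$ (since $u^0\leq L_f$), and is supported on $\{u>L_f\}$. As in Lemma \ref{mp}, the time-derivative term yields $\tfrac12\|\zeta^{L_f}(s)\|_{\mathrm{L}^{2,2}}^2$ and the parabolic part $\int_0^s\mathfrak{l}_{u}(u,\zeta^{L_f})+\mathfrak{m}_{u}(u,\zeta^{L_f})\,\mathrm{d}t\geq0$ by ellipticity of $k$ and the sign computation \eqref{marray}. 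The only new term is $\int_0^s\mathcal{F}(u)(\zeta^{L_f})\,\mathrm{d}t$, which I claim is $\leq0$ termwise: the bulk integrals involve $f_+(\tp)$ on $\{\tp>L_f\}\subset\op$ and $f_-(\tm)$ on $\{\tm>L_f\}\subset\om$; the interface integrals involve $g_\pm$, depending only on $u_\pm$, on $\{\tr u_\pm>L_f\}\subset\Gamma$ and $f_\Gamma$, depending only on $u_\Gamma$, on $\{\tg>L_f\}$; the boundary integrals involve $h_\pm(u_\pm)$ on $\{\tr u_\pm>L_f\}$ and $h_\Gamma(\tg)$ on $\{\tg>L_f\}\subset\partial\Gamma$. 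By the choice of $L_f$, every such $\varphi$-value is $\leq0$ on the relevant set, while the corresponding factor of $\zeta^{L_f}$ is $\geq0$ there, so each integrand is nonpositive. Hence $\tfrac12\|\zeta^{L_f}(s)\|_{\mathrm{L}^{2,2}}^2\leq\int_0^s\mathcal{F}(u)(\zeta^{L_f})\,\mathrm{d}t\leq0$, giving $\zeta^{L_f}\equiv0$ and $u(t)\leq L_f$ for all $t$.

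The lower bound is entirely analogous, testing with $\zeta_{l_f}(t)=[(u(t)-l_f)^-]$, which vanishes at $t=0$ since $u^0\geq l_f$. Here all three signs flip: the time-derivative term gives $-\tfrac12\|\zeta_{l_f}(s)\|_{\mathrm{L}^{2,2}}^2$, and both $\mathfrak{l}_u(u,\zeta_{l_f})$ and $\mathfrak{m}_u(u,\zeta_{l_f})$ become nonpositive (the latter because each factor $(r_i-r_j)(\zeta_{l_f,i}-\zeta_{l_f,j})\leq0$, as $v\mapsto(l_f-v)^+$ is nonincreasing), so the reaction contribution must be shown nonnegative, which again follows from $\varphi(v)\geq0$ for $v\leq l_f$. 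Combining the two bounds yields $u(t)\in\mathrm{C}_{l_f}^{L_f}$ for all $t\in\overline{J_T}$. The only genuinely delicate point is the termwise sign of $\mathcal{F}(u)(\zeta)$: this is precisely where the structural hypotheses that $g_\pm$ depend only on $u_\pm$ and $f_\Gamma$ only on $u_\Gamma$ enter, since the support of a given test component is controlled only by the corresponding scalar variable crossing the threshold, and a dependence on the full vector $u$ would destroy the pointwise sign and hence the argument.
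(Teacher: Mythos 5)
Your proof is correct and follows essentially the same route as the paper: choose the truncation levels $l_f,L_f$ from the dissipativity condition \eqref{dissCond}, rerun the testing argument of Lemma \ref{mp} with these levels, and verify termwise that $\int_0^s\mathcal{F}(u)(\zeta^{L_f})\,\mathrm{d}t\leq0$ and $\int_0^s\mathcal{F}(u)(\zeta_{l_f})\,\mathrm{d}t\geq0$, which is exactly what the paper leaves as ``straightforward to check.'' Your version is in fact slightly more careful than the printed proof in two harmless respects: you explicitly fold the initial bounds $l_0,L_0$ into $l_f,L_f$ (needed so that $\zeta^{L_f}(0)=\zeta_{l_f}(0)=0$; the paper defers this to the ``possible adjustments'' remark in Step (4) of Theorem \ref{thm:local}), and you spell out the sign flips in the lower-bound test, including why the structural hypothesis that $g_\pm$ depend only on $u_\pm$ and $f_\Gamma$ only on $u_\Gamma$ is what makes the pointwise sign argument work.
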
 \begin{proof} Condition \eqref{dissCond} guarantees
that for every component $\varphi$, there exist constants $-\infty<l_{\varphi}\leq L_{\varphi}<+\infty$
such that $\varphi(v)>0$ for all $v<l_{\varphi}$ and $\varphi(v)<0$
for all $v>L_{\varphi}$. Let $l_{f}:=\min_{\varphi}(l_{\varphi})$
and $L^{f}:=\max_{\varphi}(L_{\varphi})$. In the choice of test functions
$\zeta_{l},\zeta^{L}$ in the proof of Lemma \ref{mp}, replace $l,L$
by $l_{f},L_{f}$. It is then straightforward to check that for all
$s\in\overline{J_{T}}$, $\int_{0}^{s}\mathcal{F}(u(t))(\zeta^{L_{f}}(t))\,\mathrm{d}t\leq0$
and that $\int_{0}^{s}\mathcal{F}(u(t))(\zeta_{l_{f}}(t))\,\mathrm{d}t\geq0$.
Combined with the calculations in the proof of Lemma \ref{mp}, this
proves the claim. \end{proof} 

\subsection*{\eqref{proof:thm} Schaefer argument and proof of Theorem \ref{thm:local}}

Let $q,q_{\Gamma}$ be given by Lemma \ref{Aiso} and let $r$ and
$u^{0}\in\mathrm{X}_{q,q_{\Gamma}}^{r}$ be given as in Theorem \ref{thm:local}.
By embedding \eqref{eq:beta}, $u^{0}\in\mathrm{C}_{l}^{L}$ for some
$-\infty<l\leq L<+\infty$. In the following, let 
\begin{equation}
C_{u^{0}}^{0}(J_{T};\mathrm{C}^{0,0}):=\{u\in C^{0}(J_{T};\mathrm{C}^{0,0})\colon u(0)=u^{0}\}.
\end{equation}
Define 
\[
\mathcal{T}\colon C_{u^{0}}^{0}(J_{T};\mathrm{C}^{0,0})\to C_{u^{0}}^{0}(J_{T};\mathrm{C}^{0,0})
\]
by $\mathcal{T}w=v\in\mathrm{MR}_{q,q_{\Gamma}}^{r}$ the solution
of \eqref{nonA} with $v(0)=u^{0}$ given by Lemma \ref{AnAmr}. By
embedding \eqref{eq:compact}, Lemma \ref{AnAmr} and Assumption \ref{Lipschitz},
\[
\mathcal{T}w=\mathrm{Id}_{\mathrm{MR}_{q,q_{\Gamma}}^{r}\to C_{u^{0}}^{0}(J_{T};\mathrm{C}^{0,0})}(\partial_{t}+\mathcal{A}_{w(\cdot)}^{q,q_{\Gamma}})^{-1}(\mathcal{F}(w))
\]
is well-defined and compact. A fixed point of $\mathcal{T}$ would
solve \eqref{eq:qlACP}. To obtain existence of a fixed point by Schaefer's
Theorem \citep[Theorem~9.2.4]{Evans2010PDE}, it suffices to show
that
\begin{enumerate}
\item $\mathcal{T}$ is continuous, and that 
\item \label{enu:the-Schaefer-set}the Schaefer set 
\[
\mathrm{S}:=\{u\in C_{u^{0}}^{0}(J_{T};\mathrm{C}^{0,0}):u=\lambda\mathcal{T}(u)\text{ for some }0\leq\lambda\leq1\}
\]
is bounded. 
\end{enumerate}
To show continuity of $\mathcal{T}$, we show continuity of the map
\[
C_{u^{0}}^{0}(J_{T};\mathrm{C}^{0,0})\ni w\mapsto(\partial_{t}+\mathcal{A}_{w(\cdot)}^{q,q_{\Gamma}})^{-1}\in\mathcal{B}(L^{r}(J_{T};\mathrm{W}_0^{-1,q,q_{\Gamma}});\mathrm{MR}_{q,q_{\Gamma}}^{r})=:\mathcal{B}_{q,q_{\Gamma}}^{r}.
\]
This follows from bounded Lipschitzianity of the map $w\mapsto\mathcal{A}_{w(\cdot)}^{q,q_{\Gamma}}$
and continuity of the inversion. A detailed proof is the following:
for given $w\in C_{u^{0}}^{0}(J_{T};\mathrm{C}^{0,0})$, there are
constants $-\infty<l_{0}<L_{0}<+\infty$ (possibly dependent on $T$),
such that $w\in C_{u^{0}}^{0}(J_{T};\mathrm{C}_{l_{0}}^{L_{0}}).$
Define 
\[
C_{w}:=\Vert(\partial_{t}+\mathcal{A}_{w(\cdot)}^{q,q_{\Gamma}})^{-1}\Vert_{\mathcal{B}_{q,q_{\Gamma}}^{r}}
\]
(Lemma \ref{AnAmr}) and let $C_{\delta}$ be a Lipschitz constant
for $k$ and $m$ on $(\mathbb{R}_{l_{0}-\delta}^{L_{0}+\delta})^{3}$
with $\delta>0$ ($\delta$ possibly small such that Assumption \ref{Akm}\eqref{akm3}
applies). Let $w_{n}\to w$ in $C_{u^{0}}^{0}(J_{T};\mathrm{C}^{0,0})$
and let $n$ be so large that $w_{n}\in C_{u^{0}}^{0}(J_{T};\mathrm{C}_{l_{0}-\delta}^{L_{0}+\delta})$
and $\Vert w_{n}-w\Vert_{C_{u^{0}}^{0}(J_{T};\mathrm{C}^{0,0})}\leq\frac{1}{2C_{w}C_{\delta}T^{1/r}}$.
Then by Assumption \ref{Akm}\eqref{akm3} and the definition of $\mathcal{A}_{w}^{q,q_{\Gamma}}$,
\begin{equation}
\Vert\mathcal{A}_{w(\cdot)}^{q,q_{\Gamma}}-\mathcal{A}_{w_{n}(\cdot)}^{q,q_{\Gamma}}\Vert_{L^{r}(J_{T};\mathcal{B}(\mathrm{W}^{1,q,q_{\Gamma}},\mathrm{W}_0^{-1,q,q_{\Gamma}})}\leq C_{\delta}T^{1/r}\Vert w-w_{n}\Vert_{C_{u^{0}}^{0}(J_{T};\mathrm{C}^{0,0})},\label{eq:LS}
\end{equation}
and thus
\begin{align*}
 & \Vert(\partial_{t}+\mathcal{A}_{w(\cdot)}^{q,q_{\Gamma}})^{-1}-(\partial_{t}+\mathcal{A}_{w_{n}(\cdot)}^{q,q_{\Gamma}})^{-1}\Vert_{\mathcal{B}_{q,q_{\Gamma}}^{r}}\\
 & =\Vert(\partial_{t}+\mathcal{A}_{w(\cdot)}^{q,q_{\Gamma}})^{-1}(\partial_{t}+\mathcal{A}_{w_{n}(\cdot)}^{q,q_{\Gamma}}-\partial_{t}+\mathcal{A}_{w(\cdot)}^{q,q_{\Gamma}})(\partial_{t}+\mathcal{A}_{w_{n}(\cdot)}^{q,q_{\Gamma}})^{-1}\Vert_{\mathcal{B}_{q,q_{\Gamma}}^{r}}\\
 & \leq\Vert(\partial_{t}+\mathcal{A}_{w(\cdot)}^{q,q_{\Gamma}})^{-1}(\mathcal{A}_{w_{n}(\cdot)}^{q,q_{\Gamma}}-\mathcal{A}_{w(\cdot)}^{q,q_{\Gamma}})\Vert_{\mathcal{B}(\mathrm{MR}_{q,q_{\Gamma}}^{r};\mathrm{MR}_{q,q_{\Gamma}}^{r})}\cdot\\
 & \qquad\cdot\Vert(\partial_{t}+\mathcal{A}_{w(\cdot)}^{q,q_{\Gamma}})^{-1}+(\partial_{t}+\mathcal{A}_{w_{n}(\cdot)}^{q,q_{\Gamma}})^{-1}-(\partial_{t}+\mathcal{A}_{w(\cdot)}^{q,q_{\Gamma}})^{-1}\Vert_{\mathcal{B}_{q,q_{\Gamma}}^{r}}\\
 & \leq C_{w}^{2}C_{\delta}T^{1/r}\Vert w_{n}-w\Vert_{C_{u^{0}}^{0}(J_{T};\mathrm{C}^{0,0})}+\frac{1}{2}\Vert(\partial_{t}+\mathcal{A}_{w(\cdot)}^{q,q_{\Gamma}})^{-1}-(\partial_{t}+\mathcal{A}_{w_{n}(\cdot)}^{q,q_{\Gamma}})^{-1}\Vert_{\mathcal{B}_{q,q_{\Gamma}}^{r}},
\end{align*}
so $\mathcal{T}$ is continuous. 

To show \eqref{enu:the-Schaefer-set}, note that if $u_{\lambda}=\lambda\mathcal{T}(u_{\lambda})$
for some $0<\lambda\leq1$, then by definition of $\mathcal{T}$,
$u_{\lambda}\in\mathrm{MR}_{q,q_{\Gamma}}^{r}$ and $u_{\lambda}$
satisfies \eqref{eq:qlACP} with initial value $u_{\lambda}(0)=\lambda u^{0}$
and right-hand-side $\lambda\mathcal{F}(u_{\lambda})$. Thus, if $\mathcal{F}\equiv0$
or if $\mathcal{F}$ is as in Corollary \ref{rhsmp}, then $\mathrm{S}$
is bounded.

In addition, the $L^{\infty}$-bounds are such that $k_{l}^{L}=k$
and $m_{l}^{L}=m$ along orbits of $u^{0}$, justifying step \eqref{proof:bc}
a posteriori with possible adjustments to the choice of $l$ and $L$
by Corollary \ref{rhsmp}. This concludes the proof of existence in
Theorem \ref{thm:local}.

Local well-posedness and hence also uniqueness follow from the Lipschitz
dependence \eqref{eq:LS} that provides a contraction for small $T>0$,
see \citep[Theorem~3.1]{PruessBari2002} for the abstract result in
the theory of maximal parabolic regularity for quasilinear abstract
Cauchy problems and see \citep{DisDIMX15} for a proof in a very similar
setting. Global stability in $\mathrm{L}^{2,2}$ is shown in the next
section. 

\section{Exponential decay to equilibrium and stability}

\label{stability} We prove that under Assumption \ref{Akm}\eqref{akm2},
the interaction on and across the interface $\Gamma$ is sufficiently
strong to force the system into the uniform equilibrium given by 
\[
u^{\infty}=\frac{1}{V}\left(\int_{\op}u_{+}^{0}(x)\,\mathrm{d}x+\int_{\om}u_{-}^{0}(x)\,\mathrm{d}x+\int_{\Gamma}u_{\Gamma}^{0}(y)\,\mathrm{d}\mathcal{H}_{d-1}\right)
\]
associated to $u^{0}$, where 
\[
V=|\op|+|\om|+|\Gamma|_{\mathcal{H}_{d-1}}.
\]
The quasilinear gradient structure combined with the $L^{\infty}$-bounds
provide an exponential rate in $\mathrm{L}^{2,2}$. Here, by a slight
abuse of notation, $u^{\infty}$ also denotes the constant vector
function $u^{\infty}=u^{\infty}(1,1,1)^{T}\in\mathrm{C}^{0,0}$.

\begin{thm} \label{thm:convToEq}Under the assumptions of Theorem
\ref{thm:local} with $\mathcal{F}\equiv0$, given $u^{0}\in\mathrm{X}_{q,q_{\Gamma}}^{r}$,
the solution $u$ converges to $u^{\infty}$ at an exponential rate,
in the sense that there is a $\delta>0$ depending only on $u^{0},k,m,\Omega$
and $\Gamma$, such that for all $s\geq0$, 
\begin{equation}
\Vert u(s)-u^{\infty}\Vert_{\mathrm{L}^{2,2}}\leq e^{-\delta s}\Vert u^{0}-u^{\infty}\Vert_{\mathrm{L}^{2,2}}.\label{eq:exp}
\end{equation}
\end{thm}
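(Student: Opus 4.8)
The plan is to run an energy--dissipation estimate in $\mathrm{L}^{2,2}$ and to produce the decay rate by combining the coercivity of the dissipation form $\mathfrak{a}_u$ modulo constants with the bulk-interface Poincar\'e inequality of Lemma \ref{BIPI}. The solution is global by Theorem \ref{thm:local}, and by the maximum principle Lemma \ref{mp} we have $u(s)\in\mathrm{C}_l^L$ for all $s\geq0$, so the ellipticity and transmission bounds $\underline{k},\overline{k},\underline{m}$ of Assumption \ref{Akm} hold uniformly along the orbit; all constants below therefore depend only on $u^0$ (through $l,L$), $k$, $m$, $\Omega$ and $\Gamma$. The first step is mass conservation: testing \eqref{eq:qlACP} with the constant $(1,1,1)\in\mathrm{W}^{1,2,2}$ and using that its gradients vanish and that $(1,1,1)$ lies in the kernel of $\mathbf{m}$ by \eqref{eq:kerMu}, one gets $\mathfrak{a}_{u(t)}(u(t),(1,1,1))=0$, hence
\[
\frac{\mathrm{d}}{\mathrm{d}t}\Big(\int_{\op}u_+\,\mathrm{d}x+\int_{\om}u_-\,\mathrm{d}x+\int_\Gamma u_\Gamma\,\mathrm{d}\mathcal{H}_{d-1}\Big)=0.
\]
By the definition of $u^\infty$ this says exactly that $u(s)-u^\infty$ has vanishing total mass for every $s\geq0$, which is the hypothesis required to apply Lemma \ref{BIPI} to $u(s)$.

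Next I would derive the energy identity. Since $u\in\mathrm{MR}^r_{q,q_\Gamma}$ with $q,q_\Gamma\geq2$ and $r>2$, on the bounded interval $J_T$ we have the embeddings $u\in L^r(J_T;\mathrm{W}^{1,q,q_\Gamma})\hookrightarrow L^{r'}(J_T;\mathrm{W}^{1,2,2})$ and $\dot u\in L^r(J_T;\mathrm{W}^{-1,q,q_\Gamma})\hookrightarrow L^{r'}(J_T;\mathrm{W}^{-1,2,2})$, so the Lions--Magenes energy identity for the Gelfand triple $\mathrm{W}^{1,2,2}\hookrightarrow\mathrm{L}^{2,2}\hookrightarrow\mathrm{W}^{-1,2,2}$ applies. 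Testing \eqref{eq:qlACP} with $u-u^\infty\in\mathrm{W}^{1,2,2}$ and using $\mathfrak{a}_{u}(\cdot,u^\infty)=\mathfrak{a}_u(u^\infty,\cdot)=0$ from \eqref{eq:kera}, this yields
\[
\frac{1}{2}\frac{\mathrm{d}}{\mathrm{d}s}\Vert u(s)-u^\infty\Vert_{\mathrm{L}^{2,2}}^2=-\mathfrak{a}_{u(s)}(u(s)-u^\infty,u(s)-u^\infty).
\]

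The heart of the argument is to bound the dissipation below by $\Vert u-u^\infty\Vert_{\mathrm{L}^{2,2}}^2$. Using ellipticity \eqref{ell} for the bulk and surface gradient terms and the pointwise identity $r\cdot\mathbf{m}\,r=m_{+}(r_+-r_\Gamma)^2+m_{-}(r_--r_\Gamma)^2+m_{\Gamma}(r_+-r_-)^2$ for the transmission term, evaluated at $r=\tr(u-u^\infty)$, where the constant $u^\infty$ cancels in every difference, one obtains
\[
\mathfrak{a}_{u}(u-u^\infty,u-u^\infty)\geq\underline{k}\,\Vert\nabla u\Vert_{\mathrm{L}^{2,2}}^2+\underline{m}\big(\Vert\tp-\tg\Vert_{L^2(\Gamma)}^2+\Vert\tp-\tm\Vert_{L^2(\Gamma)}^2\big),
\]
where I used that by Assumption \ref{Akm}\eqref{akm2} at least two transmission coefficients, say $m_+,m_\Gamma$, are bounded below by $\underline{m}$. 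The missing third difference is recovered from the triangle inequality $\Vert\tm-\tg\Vert_{L^2(\Gamma)}^2\leq2\Vert\tm-\tp\Vert_{L^2(\Gamma)}^2+2\Vert\tp-\tg\Vert_{L^2(\Gamma)}^2$, so that after adjusting constants the right-hand side dominates $c_0$ times the full bracket appearing in Lemma \ref{BIPI}, with $c_0>0$ depending only on $\underline{k},\underline{m}$. Applying the bulk-interface Poincar\'e inequality then gives $\mathfrak{a}_{u}(u-u^\infty,u-u^\infty)\geq(c_0/C)\Vert u-u^\infty\Vert_{\mathrm{L}^{2,2}}^2$.

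Combining this with the energy identity yields $\tfrac{\mathrm{d}}{\mathrm{d}s}\Vert u-u^\infty\Vert_{\mathrm{L}^{2,2}}^2\leq-2\delta\,\Vert u-u^\infty\Vert_{\mathrm{L}^{2,2}}^2$ with $\delta:=c_0/C$, and Gronwall's inequality produces \eqref{eq:exp}; since $\delta$ depends only on $\underline{k},\underline{m}$ and the Poincar\'e constant $C$, it depends only on $u^0,k,m,\Omega,\Gamma$ as claimed. I expect the main obstacle to be the coercivity step, specifically the reduction to the bulk-interface Poincar\'e inequality: one must verify both that the weak coupling (only two transmission coefficients bounded below) still controls all three interface jumps, and that the sum of gradient control and interface-jump control is exactly the quantity that Lemma \ref{BIPI} converts into control of $\Vert u-u^\infty\Vert_{\mathrm{L}^{2,2}}$. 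Once mass conservation is in hand, the remaining steps are routine.
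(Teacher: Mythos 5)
Your proposal is correct and follows essentially the same route as the paper: test with $u-u^\infty$ (using $\mathfrak{a}_{u}(u,u^\infty)=0$) to get the $\mathrm{L}^{2,2}$ energy balance, bound the dissipation below via the ellipticity and transmission bounds, apply the bulk-interface Poincar\'e inequality of Lemma \ref{BIPI}, and conclude by Gronwall. You additionally make explicit three points the paper leaves implicit --- mass conservation (so that $u^\infty$ is the equilibrium associated to $u(s)$ for every $s$), the Lions--Magenes justification of the energy identity, and the triangle-inequality recovery of the third interface jump when only two transmission coefficients are bounded below (which the paper only remarks on at the end of the proof of Lemma \ref{BIPI}) --- all of which are accurate.
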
 \begin{proof} Since for every solution $u\in\mathrm{MR}_{q,q_{\Gamma}}^{r}$
and $T>0$, $\mathfrak{a}_{u(s)}(u(s),u^{\infty})=0$, applying (\ref{eq:qlACP})
to $u-u^{\infty}$ shows the energy balance 
\begin{equation}
\Vert u(s)-u^{\infty}\Vert_{\mathrm{L}^{2,2}}^{2}+\int_{0}^{s}\mathfrak{a}_{u(t)}(u(t),u(t))\,\mathrm{d}t=\Vert u^{0}-u^{\infty}\Vert_{\mathrm{L}^{2,2}}^{2},\label{eq:L2 energy balance-1}
\end{equation}
for all $s>0$. By Lemma \ref{mp} and Assumption \ref{Akm}, 
\begin{align*}
\mathfrak{l}_{u(t)}(u(t),u(t)) & \geq C\Vert\nabla u(t)\Vert_{\mathrm{L}^{2,2}}^{2},\text{ and}\\
\mathfrak{m}_{u(t)}(u(t),u(t)) & \geq\underline{m}\left(\int_{\Gamma}(\tp-\tg)^{2}(t)+(\tm-\tg)^{2}(t)+(\tp-\tm)^{2}(t)\,\mathrm{d}\mathcal{H}_{d-1})\right).
\end{align*}
Hence, with the following Poincaré-type inequality, the claim follows
directly from Gronwall's inequality. \end{proof} 

\begin{lemma}\label{BIPI}(Bulk-Interface Poincaré Inequality) Let
$u\in\mathrm{W}^{1,2,2}$ and $u^{\infty}$ the equilibrium associated
to $u$. Then there is a constant $C>0$, independent of $u$, such
that 
\begin{equation}
\Vert u-u^{\infty}\Vert_{\mathrm{L}^{2,2}}^{2}\leq C(\Vert\nabla u\Vert_{\mathrm{L}^{2,2}}^{2}+\Vert\tp-\tg\Vert_{L^{2}(\Gamma)}^{2}+\Vert\tm-\tg\Vert_{L^{2}(\Gamma)}^{2}+\Vert\tp-\tm\Vert_{L^{2}(\Gamma)}^{2}).\label{eq:bipPoinc}
\end{equation}
\end{lemma}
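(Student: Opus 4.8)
The plan is to prove the bulk-interface Poincaré inequality \eqref{eq:bipPoinc} by contradiction, using a standard compactness argument adapted to the three-component structure. Suppose the inequality fails. Then there exists a sequence $u^n \in \mathrm{W}^{1,2,2}$ with associated equilibria $(u^\infty)^n$ such that $\Vert u^n - (u^\infty)^n \Vert_{\mathrm{L}^{2,2}}^2 = 1$ for all $n$, while the right-hand side tends to $0$. Since $u^\infty$ depends linearly on $u$ through integration, I would first reduce to the case $(u^\infty)^n = 0$ by replacing $u^n$ with $v^n := u^n - (u^\infty)^n$; this is legitimate because $\nabla v^n = \nabla u^n$, the interface differences $v_+^n - v_\Gamma^n = u_+^n - u_\Gamma^n$ etc.\ are unchanged (the constant $(u^\infty)^n$ cancels), and the equilibrium associated to $v^n$ is $0$. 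So after normalization I have a sequence $v^n$ with $\Vert v^n \Vert_{\mathrm{L}^{2,2}} = 1$, whose associated equilibrium vanishes, and for which $\Vert \nabla v^n \Vert_{\mathrm{L}^{2,2}}^2 + \Vert v_+^n - v_\Gamma^n \Vert_{L^2(\Gamma)}^2 + \Vert v_-^n - v_\Gamma^n \Vert_{L^2(\Gamma)}^2 + \Vert v_+^n - v_-^n \Vert_{L^2(\Gamma)}^2 \to 0$.

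The next step is extraction of a convergent subsequence. Because $\Vert v^n \Vert_{\mathrm{L}^{2,2}} = 1$ and $\Vert \nabla v^n \Vert_{\mathrm{L}^{2,2}} \to 0$, the sequence $(v^n)$ is bounded in $\mathrm{W}^{1,2,2}$. By the Rellich-Kondrachov theorem on each of the Lipschitz domains $\op$, $\om$ and on the $C^1$-manifold $\Gamma$ (Assumption~\ref{ass1} guarantees exactly the regularity needed for compact Sobolev embeddings on each piece), a subsequence converges strongly in $\mathrm{L}^{2,2}$ to some limit $v$. Passing to the limit in $\Vert \nabla v^n \Vert_{\mathrm{L}^{2,2}} \to 0$ together with weak lower semicontinuity of the gradient norm forces $\nabla v = 0$ on each component, so $v_+ \equiv c_+$, $v_- \equiv c_-$ and $v_\Gamma \equiv c_\Gamma$ are constants. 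I then use continuity of the trace operator \eqref{eq:trace operator} to pass to the limit in the three interface penalty terms: since the traces of $v^n$ converge in $L^2(\Gamma)$ and the differences $v_+^n - v_\Gamma^n$, $v_-^n - v_\Gamma^n$, $v_+^n - v_-^n$ all converge to $0$ in $L^2(\Gamma)$, the limiting constants satisfy $c_+ = c_\Gamma$, $c_- = c_\Gamma$ and $c_+ = c_-$, i.e.\ $c_+ = c_- = c_\Gamma =: c$.

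To close the argument I use the two structural facts that constrain $v$. First, strong $\mathrm{L}^{2,2}$-convergence gives $\Vert v \Vert_{\mathrm{L}^{2,2}} = 1$, so $v$ is not identically zero, meaning $c \neq 0$. Second, the equilibrium associated to $v^n$ is $0$ for every $n$; since the equilibrium is a continuous linear functional of $v$ in $\mathrm{L}^{2,2}$ (it is $\frac{1}{V}$ times the sum of the three integrals of the components), the equilibrium associated to the limit $v$ is also $0$. But the equilibrium of the constant vector $v = c(1,1,1)$ computes to $\frac{1}{V}(c|\op| + c|\om| + c|\Gamma|_{\mathcal{H}_{d-1}}) = c$, so $c = 0$, contradicting $c \neq 0$. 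This contradiction establishes the inequality.

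I expect the main obstacle to be bookkeeping rather than conceptual: one must verify carefully that the compact embedding holds simultaneously on all three geometric pieces under only the Lipschitz/$C^1$ regularity of Assumption~\ref{ass1}, and that the traces on $\Gamma$ behave well enough to pass to the limit in the penalty terms. The genuinely delicate point is the interplay of the two constraints at the end: one needs both that the limit has unit norm (ruling out $v \equiv 0$) and that the mean-zero/equilibrium-zero condition survives the limit, since it is precisely the strength of the three-way interface coupling encoded in \eqref{eq:kerMu} that forces the three constants to agree and then the mass constraint that forces them to vanish. An alternative, more quantitative route would avoid contradiction by applying the ordinary Poincaré inequality on each component to control $\Vert v_\pm - \bar v_\pm \Vert$ and $\Vert v_\Gamma - \bar v_\Gamma \Vert$ by the gradients, then using the three trace-difference terms to control the differences of the three component averages $\bar v_+, \bar v_-, \bar v_\Gamma$, and finally relating these averages to $v^\infty$; but the compactness argument is cleaner and I would present that.
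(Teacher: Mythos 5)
Your compactness argument is correct, but it is a genuinely different route from the paper's own proof. The paper argues directly and quantitatively: it introduces the component averages $\bar{u}_{+},\bar{u}_{-},\bar{u}_{\Gamma}$, establishes the exact algebraic identity $\Vert u-u^{\infty}\Vert_{\mathrm{L}^{2,2}}^{2}=\Vert u-\bar{u}\Vert_{\mathrm{L}^{2,2}}^{2}+\frac{|\op||\om|}{V}(\bar{u}_{+}-\bar{u}_{-})^{2}+\frac{|\op||\Gamma|}{V}(\bar{u}_{+}-\bar{u}_{\Gamma})^{2}+\frac{|\om||\Gamma|}{V}(\bar{u}_{-}-\bar{u}_{\Gamma})^{2}$, controls $\Vert u-\bar{u}\Vert$ by the mean-value Poincar\'e inequality \eqref{eq:poinAV} on each of the three pieces, and bounds the differences of averages via the trace-type Poincar\'e inequality \eqref{eq:poinGamma} together with the interface difference terms --- precisely the ``alternative, more quantitative route'' you sketch in your closing sentence and then set aside. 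What the paper's approach buys is an explicit constant (assembled from the individual Poincar\'e constants and the measures $|\op|,|\om|,|\Gamma|$) and the transparent by-product, noted at the end of the paper's proof, that only two of the three interface terms are actually needed, matching Assumption \ref{Akm}\eqref{akm2}; your argument buys brevity and structural clarity but is non-constructive and invokes Rellich--Kondrachov compactness on all three geometric pieces. Two small points to tighten in your write-up: you justify $L^{2}(\Gamma)$-convergence of the traces by continuity of the trace operator \eqref{eq:trace operator}, but at that stage you have only extracted weak $\mathrm{W}^{1,2,2}$ convergence; either observe that the convergence is in fact strong in $\mathrm{W}^{1,2,2}$ (since $\nabla v^{n}\to0$ strongly in $\mathrm{L}^{2,2}$ and $\nabla v=0$), or invoke compactness of the trace map into $L^{2}(\Gamma)$. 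Also, your step ``$\nabla v=0$ forces $v_{\pm},v_{\Gamma}$ constant'' uses connectedness of $\op$, $\om$ and $\Gamma$; the paper's proof implicitly makes the same assumption through the mean-value Poincar\'e inequalities, so this is consistent, but it is worth stating.
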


\begin{proof} For any $u\in\mathrm{L}^{1,1}$, let in the following
$\ttp:=\frac{1}{|\op|}\int_{\op}\tp$, $\ttm:=\frac{1}{|\om|}\int_{\om}\tm$
and $\ttg:=\frac{1}{\vert\Gamma\vert}\int_{\Gamma}\tg$ and let $\bar{u}=(\ttp,\ttm,\ttg)\in\mathbb{R}^{3}$.
To prove (\ref{eq:bipPoinc}), we use the two (standard) versions
of Poincaré's inequality, see e.g. \citep[Theorem~1 and Corollary~3]{BouCha2007PI}.
For all $u_{+}\in W^{1,p}(\op)$, 
\begin{enumerate}
\item there is a constant $\bar{C}_{+}>0$, such that 
\begin{equation}
\Vert u_{+}-\bar{u}_{+}\Vert_{L^{2}(\op)}^{2}\leq\bar{C}_{+}\Vert\nabla u_{+}\Vert_{L^{2}(\op)}^{2},\quad\text{and,}\label{eq:poinAV}
\end{equation}
\item there is a constant $C_{+}^{\Gamma}>0$, such that 
\begin{equation}
\Vert u_{+}\Vert_{L^{2}(\op)}^{2}\leq C_{+}^{\Gamma}(\Vert\nabla u_{+}\Vert_{L^{2}(\op)}^{2}+\frac{1}{|\Gamma|}|\int_{\Gamma}u_{+}|^{2}).\label{eq:poinGamma}
\end{equation}
\end{enumerate}
Clearly, analogous statements hold for $\om$ with constants $\bar{C}_{-}>0$
and $C_{-}^{\Gamma}>0$ and \eqref{eq:poinAV} holds for $u_{\Gamma}$
on the manifold $\Gamma$ with constant $\bar{C}_{\Gamma}>0$. An
elementary calculation shows that 
\[
\Vert u-u^{\infty}\Vert_{\mathrm{L}^{2,2}}^{2}=\Vert u-\bar{u}\Vert_{\mathrm{L}^{2,2}}^{2}-V(u^{\infty})^{2}+|\op|\ttp^{2}+|\om|\ttm^{2}+|\Gamma|\ttg^{2}.
\]
Inserting $V\tinf=|\op|\ttp+|\om|\ttm+|\Gamma|\ttg$ 
gives 
\begin{equation}
\Vert u-\tinf\Vert_{\mathrm{L}^{2,2}}^{2}=\Vert u-\bar{u}\Vert_{\mathrm{L}^{2,2}}^{2}+\frac{|\op||\om|}{V}(\ttp-\ttm)^{2}+\frac{|\op||\Gamma|}{V}(\ttp-\ttg)^{2}+\frac{|\om||\Gamma|}{V}(\ttm-\ttg)^{2}.\label{eq:infEst1}
\end{equation}
By (\ref{eq:poinAV}), $\Vert u-\bar{u}\Vert_{\mathrm{L}^{2,2}}^{2}\leq(\bar{C}_{+}+\bar{C}_{-}+\bar{C}_{\Gamma})\Vert\nabla u\Vert_{\mathrm{L}^{2,2}}^{2}$,
so it remains to estimate the last three terms in (\ref{eq:infEst1})
by the right-hand-side in \eqref{eq:bipPoinc}. By Hölder's inequality
and by (\ref{eq:poinGamma}), 
\begin{align*}
(\ttp-\ttg)^{2} & =\frac{1}{|\op|^{2}}(\int_{\op}\tp-\ttg)^{2}\leq\frac{1}{|\op|}\Vert\tp-\ttg\Vert_{L^{2}(\op)}^{2}\\
 & \leq\frac{C_{+}^{\Gamma}}{|\op|}(\Vert\nabla\tp\Vert_{L^{2}(\op)}^{2}+\frac{1}{|\Gamma|}|\int_{\Gamma}\tp-\ttg|^{2})\\
 & \leq\frac{C_{+}^{\Gamma}}{|\op|}(\Vert\nabla\tp\Vert_{L^{2}(\op)}^{2}+\Vert\tp-\tg\Vert_{L^{2}(\Gamma)}^{2}).
\end{align*}
The term $(\ttm-\ttg)^{2}$ can be estimated analogously. In order
to estimate the last term $(\ttp-\ttm)^{2}$, insert $-\ttg+\ttg$
and use the previous estimates. 
With this strategy, it is clear that for (\ref{eq:bipPoinc}) to hold,
it is sufficient that two of the three coefficient functions $m_{+},m_{-},m_{\Gamma}$
are positive, so not every pair of unknowns needs to interact across
$\Gamma$. It is also sufficient for two of the coefficients to be
positive to guarantee the structure of the kernel of $\mathfrak{a}_{u}$
in \eqref{eq:kera}. This concludes the proof of Lemma \ref{BIPI}
and thus of Theorem \ref{thm:convToEq}. \end{proof}

In addition to exponential stability of $u^{\infty}$ within the sets
of initial data with equal mass, Theorem \ref{thm:convToEq} immediately
implies stability of $u^{\infty}$ in $\mathrm{X}_{q,q_{\Gamma}}^{r}$:
\begin{coro}\label{stab} For every $v^{\infty}\in\mathbb{R}_{+}$,
$\varepsilon>0$, if $u^{0}\in\mathrm{X}_{q,q_{\Gamma}}^{r}$ with
$\Vert u^{0}-v^{\infty}\Vert_{\mathrm{L}^{1,1}}<\varepsilon V$, then
$\vert u^{\infty}-v^{\infty}\vert<\varepsilon$. \end{coro}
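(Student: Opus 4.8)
The plan is to prove Corollary \ref{stab} as a short and direct consequence of the definition of the equilibrium $u^\infty$ associated to $u^0$, so that no dynamical information (and in particular neither Theorem \ref{thm:convToEq} nor the Poincar\'e inequality) is actually needed: the statement is purely a stability estimate at the level of initial data and their associated equilibria. The key observation is that the map $u^0 \mapsto u^\infty$ is given explicitly by averaging, namely
\[
u^\infty = \frac{1}{V}\left(\int_{\op} u^0_+\,\mathrm{d}x + \int_{\om} u^0_-\,\mathrm{d}x + \int_\Gamma u^0_\Gamma\,\mathrm{d}\mathcal{H}_{d-1}\right),
\]
which is nothing but $\frac{1}{V}$ times the total mass of $u^0$, and that the constant function $v^\infty$ has associated equilibrium exactly $v^\infty$ again, since its total mass is $v^\infty V$.

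First I would note that, because $v^\infty$ is constant on all of $\op,\om,\Gamma$, its total mass equals $v^\infty V$, so the averaging operator sends $v^\infty$ to itself. Subtracting the two mass integrals and pulling out the common factor $\frac{1}{V}$, I would then write
\[
u^\infty - v^\infty = \frac{1}{V}\left(\int_{\op} (u^0_+ - v^\infty)\,\mathrm{d}x + \int_{\om}(u^0_- - v^\infty)\,\mathrm{d}x + \int_\Gamma (u^0_\Gamma - v^\infty)\,\mathrm{d}\mathcal{H}_{d-1}\right).
\]
The right-hand side is exactly $\frac{1}{V}$ times the sum of the three signed components, and taking absolute values and using the triangle inequality bounds it by $\frac{1}{V}$ times the sum of the three $L^1$-norms of the components of $u^0 - v^\infty$, which is precisely $\frac{1}{V}\Vert u^0 - v^\infty\Vert_{\mathrm{L}^{1,1}}$.

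The conclusion then follows immediately: under the hypothesis $\Vert u^0 - v^\infty\Vert_{\mathrm{L}^{1,1}} < \varepsilon V$, we obtain $\vert u^\infty - v^\infty\vert \leq \frac{1}{V}\Vert u^0 - v^\infty\Vert_{\mathrm{L}^{1,1}} < \varepsilon$. I would close by remarking that this is the sense in which Theorem \ref{thm:convToEq} yields stability of $u^\infty$ in $X^r_{q,q_\Gamma}$: the theorem controls the distance of the orbit to \emph{its own} equilibrium in $\mathrm{L}^{2,2}$, while this corollary controls how much the equilibrium itself can move under $\mathrm{L}^{1,1}$-perturbations of the initial datum around a fixed constant state.

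I do not expect any genuine obstacle here; the entire argument is a one-line estimate on the averaging functional. The only point that requires a small amount of care is matching norms correctly --- the hypothesis is phrased in the $\mathrm{L}^{1,1}$-norm precisely so that it pairs with the $L^1$-duality used to estimate each integral, and one should make sure the factor $V$ is placed consistently on both sides so that it cancels cleanly. No compactness, regularity, or maximal-regularity input is invoked, which is why the corollary can be stated as an immediate consequence of the preceding theorem.
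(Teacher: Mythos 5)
Your proof is correct and is essentially identical to the paper's: the paper also writes $u^\infty - v^\infty$ as $\frac{1}{V}$ times the sum of the three integrals of $u^0 - v^\infty$ and bounds this by $\frac{1}{V}\Vert u^0 - v^\infty\Vert_{\mathrm{L}^{1,1}} < \varepsilon$. Your additional observation that the averaging operator fixes the constant $v^\infty$ is implicit in the paper's ``direct calculation'' and adds no new ingredient.
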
 \begin{proof}
A direct calculation shows that 
\begin{align*}
\vert u^{\infty}-v^{\infty}\vert & =\frac{1}{V}\left|\int_{\op}u_{+}^{0}(x)-v^{\infty}\,\mathrm{d}x+\int_{\om}u_{-}^{0}(x)-v^{\infty}\,\mathrm{d}x+\int_{\Gamma}u_{\Gamma}^{0}(y)-v^{\infty}\,\mathrm{d}y\right|\\
 & \leq\frac{1}{V}\Vert u^{0}-v^{\infty}\Vert_{\mathrm{L}^{1,1}}.
\end{align*}
\end{proof}

\section{Onsager modeling, extensions and concluding remarks\label{sec:Extensions-and-concluding}}

\subsection{Entropic gradient structure for heat transfer (Onsager model)\label{sub:Onsager-Model}}

The system in \eqref{eq:plus}–\eqref{eq:Gamma}was motivated by non-equibirum
thermodynamical modeling of heat transfer and diffusion processes
across interfaces, \citep{KjeBed08NETD}, \citep{Otti05BET}, and
based on the results in \citep{GlitzkyMielke2013} and \citep{Miel13BII}.
For example, in \citep{Miel13BII}, it is shown that for \emph{flat}
interfaces $\Gamma$, the heat transfer \emph{Onsager} or \emph{gradient
system} associated to 
\begin{equation}
\dot{\theta}=\mathcal{K}(\theta)\mathrm{D}\mathcal{S}(\theta)\label{Onsager}
\end{equation}
is represented by the set of equations 
\begin{equation}
\left\{ \!\!\begin{array}{rcll}
\dot{\theta}_{\pm}\!+\!\frac{1}{c_{\pm}}\mathrm{div}(K_{\pm}(\theta_{\pm})\nabla\frac{1}{\theta_{\pm}})\!\!\! & = & \!\!\!0, & \!\!\text{in }(0,T)\times\Omega_{\pm},\\
(\frac{K_{\pm}(\theta_{\pm})}{c_{\pm}}\nabla\frac{1}{\theta_{\pm}})\nu_{\pm}\!+\!M_{\pm}(\theta)(\frac{1}{\theta_{\pm}}\!-\!\frac{1}{\theta_{\Gamma}})\!+\!M_{\Gamma}(\theta)(\frac{1}{\theta_{\pm}}\!-\!\frac{1}{\theta_{\mp}})\!\!\! & = & \!\!\!0, & \!\!\text{on }(0,T)\times\Gamma,\\
(K_{\pm}(\theta_{\pm})\nabla\frac{1}{\theta_{\pm}})\nu_{\pm}\!\!\! & = & \!\!\!0, & \!\!\text{on }(0,T)\times\{\partial\Omega_{\pm}\backslash\Gamma\},
\end{array}\right.\label{eq:pmOS}
\end{equation}
on the bulk parts, and 
\begin{equation}
\left\{ \!\!\begin{array}{rcll}
\dot{\theta}_{\Gamma}\!+\!\frac{1}{c_{\Gamma}}\div(K_{\Gamma}(\theta)\nabla\frac{1}{\theta_{\Gamma}})\!-\!M_{+}(\theta)(\frac{1}{\theta_{+}}\!-\!\frac{1}{\theta_{\Gamma}})\!-\!M_{-}(\theta)(\frac{1}{\theta_{-}}\!-\!\frac{1}{\theta_{\Gamma}})\!\!\! & = & \!\!\!0, & \!\!\text{in }(0,T)\times\Gamma,\\
(K_{\Gamma}(\theta)\nabla\frac{1}{\theta_{\Gamma}})\nu_{\Gamma}\!\!\! & = & \!\!\!0, & \!\!\text{on }(0,T)\times\partial\Gamma,
\end{array}\right.\label{eq:gammaOS}
\end{equation}
on the flat interface $\Gamma$, where $c_{\pm},c_{\Gamma}>0$ are
the specific heats of bulk and interface materials, respectively,
and the coefficients $K,M$ specify thermal conductivity within materials
and across $\Gamma$ in an entropic modelling. In \eqref{Onsager},
$\mathcal{S}$ is the total entropy functional 
\[
\mathcal{S}(\theta)=\int_{\Omega_{+}}c_{+}\log\theta_{+}\,\mathrm{d}x+\int_{\Omega_{-}}c_{-}\log\theta_{-}\,\mathrm{d}x+\int_{\Gamma}c_{\Gamma}\log\theta_{\Gamma}\,\mathrm{d}y,
\]
and $\mathcal{K}$ is the Onsager operator corresponding to the the
dual dissipation potential 
\begin{align}
2\Psi^{*}(\theta,\phi) & =2\Psi_{+}^{*}(\theta_{+},\phi_{+})+2\Psi_{-}^{*}(\theta_{-},\phi_{-})+2\Psi_{\Gamma}^{*}(\tr\theta,\tr\phi)\nonumber \\
 & =\int_{\Omega_{+}}\nabla\frac{\phi_{+}}{c_{+}}\cdot K_{+}(\theta_{+})\nabla\frac{\phi_{+}}{c_{+}}\,\mathrm{d}x+\int_{\Omega_{-}}\nabla\frac{\phi_{-}}{c_{-}}\cdot K_{-}(\theta_{-})\nabla\frac{\phi_{-}}{c_{-}}\,\mathrm{d}x\nonumber \\
 & +\int_{\Gamma}\nabla_{\Gamma}\frac{\phi_{\Gamma}}{c_{\Gamma}}\cdot K_{\Gamma}(\mathrm{tr}_{\Gamma}\,\theta)\nabla\frac{\phi_{\Gamma}}{c_{\Gamma}}\,\mathrm{d}y+\int_{\Gamma}M_{\Gamma}(\tr\theta)(\frac{\tr\phi_{+}}{\tr c_{+}}-\frac{\tr\phi_{-}}{\tr c_{-}})^{2}\,\mathrm{d}y\nonumber \\
 & +\int_{\Gamma}M_{+}(\tr\theta)(\frac{\tr\phi_{+}}{\tr c_{+}}-\frac{\tr\phi_{\Gamma}}{\tr c_{\Gamma}})^{2}+M_{-}(\tr\theta)(\frac{\tr\phi_{-}}{\tr c_{-}}-\frac{\tr\phi_{\Gamma}}{\tr c_{\Gamma}})^{2}\,\mathrm{d}y.\label{eq:Psi}
\end{align}
Equations \eqref{eq:pmOS} and \eqref{eq:gammaOS} are equivalent
to \eqref{eq:plus}–\eqref{eq:Gamma} by differentiating $\nabla\frac{1}{\theta}$
to $-\frac{1}{\theta^{2}}\nabla\theta$ and writing $\frac{1}{\theta_{\Gamma}\theta_{+}}(\theta_{+}-\theta_{\Gamma})$
instead of $(\frac{1}{\theta_{\Gamma}}-\frac{1}{\theta_{+}})$, for
every term of this kind. The coefficients $K$ and $k$ and $M$ and
$m$ are then related via $m_{\pm}(\tr\theta)=\frac{M_{\pm}(\tr\theta)}{\theta_{\Gamma}\tr\theta_{\pm}}$,
$m_{\Gamma}(\tr\theta)=\frac{M_{\Gamma}(\tr\theta)}{\tr\theta_{+}\tr\theta_{-}}$,
$k_{\pm}(\theta_{\pm})=\frac{K_{\pm}(\theta_{\pm})}{\theta_{\pm}^{2}}$
and $k_{\Gamma}(\tr\theta)=\frac{K_{\Gamma}(\tr\theta)}{\theta_{\Gamma}^{2}}$.

We check the applicability and the implications of Theorem \ref{thm:local}:
It is straightforward to see that $K,M$ satisfy Assumption \ref{Akm}
if and only if $k,m$ satisfy Assumption \ref{Akm}. So if Assumption
\ref{Akm} on $K,M$ is respected in an entropic modeling, well-posedness
and exponential stability are obtained. In particular, the positivity
of two components of $M$ guarantees entropy production of the bulk-interface
interaction and information on the Onsager system given by $\mathcal{S}$
and $\Psi^{*}$ is retrieved: Starting from positive intial values,
$l>0$, the regularity in Theorem \ref{thm:local} and the maximum
principle justify rigorously the equivalence of \eqref{eq:pmOS},
\eqref{eq:gammaOS} and \eqref{eq:plus}–\eqref{eq:Gamma} and the
solution provides the gradient flow of $\mathcal{S}$ with respect
to the dual dissipation metric $\Psi^{*}$. The entropy $\mathcal{S}(\theta(t))$
is well-defined along orbits and $-\mathcal{S}$ provides a strict
Lyapunov functional by the energy balance $-\frac{\mathrm{d}}{\mathrm{d}t}\mathcal{S}(\theta(t))+2\Psi^{*}\big{(}\theta(t),\frac{c}{\theta(t)}\big{)}=0$
and the fact that $2\Psi^{*}\big(\theta(t),\frac{c}{\theta(t)}\big)=0$
implies $\mathfrak{a}_{\theta(t)}(\theta(t),\theta(t))=0$ along the
positive orbits of $\theta$. By Theorem \ref{thm:convToEq}, exponential
stability holds in the sense that $\Vert c\theta(t)-c\theta^{\infty}\Vert_{\mathrm{L}^{2,2}}\leq e^{-\delta t}\Vert c\theta^{0}-c\theta^{\infty}\Vert_{\mathrm{L}^{2,2}}$
for some $\delta>0$.

\subsection{Small extensions and further remarks\label{sub:Other-remarks}}

The next remarks concern extensions of Theorem \ref{thm:local},
mostly based on perturbation theory for maximal parabolic regularity.

\begin{remark} Clearly, the analysis above includes the simpler case
of bulk-surface interaction with $\om=\emptyset$, without the variable
$u_{-}$ and with $m_{-}=m_{\Gamma}=0$. \end{remark}

\begin{remark} If the Lipschitz dependence of $k$, $m$ and $\mathcal{F}$
on $u$ in Assumptions \ref{Akm}\eqref{akm3} and \ref{Lipschitz}
is improved to $C^{n}$, $n\in\mathbb{N}\cup\{\infty,\omega\}$, then
the solution $u$ in Theorem \ref{thm:local} gains time regularity
by \citep[Theorem~ 5.1]{PruessBari2002}, i.e. it follows that 
\[
u\in C^{n}(J_{T};X_{r,q,q_{\Gamma}})\cap C^{n+1-1/r}(J_{T};\mathrm{W}_0^{-1,q,q_{\Gamma}})\cap C^{n-1/r}(J_{T};\mathrm{W}^{1,q,q_{\Gamma}})
\]
and that $u\in C^{\infty}(J_{T};\mathrm{W}^{1,q,q_{\Gamma}})$ if
$n=\infty$ and $u$ is real analytic on $J_{T}$ if $n=\omega$.
\end{remark}

\begin{remark} The coefficient functions $k,m$ and external forces
and inhomogeneous boundary conditions $f,g,h$ may additionally depend
on time. For example, Theorems \ref{thm:local} and \ref{thm:convToEq}
continue to hold if Assumption \ref{Akm} holds uniformly in $t\in(0,\infty)$
for $k,m$ and $t\mapsto\mathcal{A}_{u(t)}\in\mathcal{L}(\mathrm{W}^{1,q,q_{\Gamma}},\mathrm{W}_0^{-1,q,q_{\Gamma}})$
is continuous for all $u\in\mathrm{X}_{q,q_{\Gamma}}^{r}$ and if
Assumption \ref{Lipschitz} holds where $t\mapsto\mathcal{F}(t,u)$
is in $L^{r}(J_{T};\mathrm{W}_0^{-1,q,q_{\Gamma}})$, cf. \citep[Section~3]{PruessBari2002}.
\end{remark}

\begin{remark} The results in Theorem \ref{thm:local} extend to
perturbations of $\mathcal{A}_{q}$ by lower-order terms like transport
terms $b\cdot\nabla u_{\pm}$, $b\in\mathbb{R}^{d}$. In particular,
with suitable regularity assumptions, the coefficients $c_{\pm}:\Omega_{\pm}\to\mathbb{R}_{+}\backslash\{0\}$
and $c_{\Gamma}:\Gamma\to\mathbb{R}_{+}\backslash\{0\}$ in Subsection
\ref{sub:Onsager-Model} can be chosen to depend on the spatial variables.
\end{remark}

\begin{remark} An exponential convergence rate as in \eqref{eq:infEst1}
also holds in $\mathrm{L}^{p,p}$ for $p>2$ due to interpolation
of the $p$-norms since 
\[
\Vert u(s)-u^{\infty}\Vert_{\mathrm{L}^{\infty,\infty}}\leq3\max(\vert l-u^{\infty}\vert,\vert L-u^{\infty}\vert)=:C_{\infty}
\]
 is bounded uniformly in time. For all $s\geq0$, with $\theta=1-\frac{2}{p}$,
we get
\begin{align*}
\Vert u(s)-u^{\infty}\Vert_{\mathrm{L}^{p,p}} & \leq C_{\infty}^{\theta}\Vert u(s)-u^{\infty}\Vert_{\mathrm{L}^{2,2}}^{1-\theta}\\
 & \leq C_{\infty}^{\theta}e^{-\frac{2\delta}{p}s}\Vert u^{0}-u^{\infty}\Vert_{\mathrm{L}^{2,2}}^{\frac{2}{p}}\\
 & \leq e^{-\frac{2\delta}{p}s}\Vert u^{0}-u^{\infty}\Vert_{\mathrm{L}^{p,p}}.
\end{align*}
Additional rates that exploit the gradient structure of system \eqref{eq:plus}
– \eqref{eq:Gamma} depend on the choice of $k,m$ or of the energy
and dissipation functionals $\mathcal{S}$ and $\Psi$, see e.g. \citep{BFL18vsrd}
for exponential $\mathrm{L}^{1}$-rates for volume-surface reaction-diffusion
based on entropy production. \end{remark}

\subsection*{Acknowledgements}

K.D. was supported by the European Research Council via ``ERC-2010-AdG
no. 267802 (Analysis of Multiscale Systems Driven by Functionals)''.


\bibliographystyle{abbrv}

\end{document}